\title{Completion Problems and  Sparsity  for Kemeny's Constant}
\date{\today}
\author[1]{Stephen Kirkland}
\affil[1]{Department of Mathematics, University of Manitoba, Winnipeg, MB, Canada, \href{}{stephen.kirkland@umanitoba.ca}.}
\begin{document}
\def\ones{{\bf{1}}}
\newcommand{\K}{{\cal{K}}} 
\newcommand{\reals}{\mathbb{R}} 
\newcommand{\trace}{{\rm{trace}}} 
\newcommand{\diag}{{\rm{diag}}} 
\def\S{{\cal{S}}} 
\newcommand{\naturals}{\mathbb{N}} 
\def\t{{\mathcal{t}}}
\newtheorem{theorem}{Theorem}[section]
\newtheorem{definition}[theorem]{Definition}
\newtheorem{proposition}[theorem]{Proposition}
\newtheorem{corollary}[theorem]{Corollary}
\newtheorem{observation}[theorem]{Observation}
\newtheorem{lemma}[theorem]{Lemma}
\newtheorem{example}[theorem]{Example}
\newtheorem{problem}[theorem]{Problem}
\newtheorem{remark}[theorem]{Remark}
\maketitle
\begin{abstract}
For a partially specified stochastic matrix, we consider the problem of completing it so as to minimize Kemeny's constant. We prove that for any   partially specified stochastic matrix  for which the problem is well-defined,  there is a minimizing completion that is as sparse as possible. We also find the minimum value of Kemeny's constant in two special cases: when the diagonal has been specified, and when all specified entries lie in a common row.
\end{abstract}

\noindent
{\bf{Keywords:}} Kemeny's constant, Stochastic matrix, Matrix completion.\\
\noindent
{\bf{MSC codes:}} 15B51, 15A83, 60J10. \\

\section{Introduction and motivation}\label{sec:intro}


We live in a world of incomplete information. Consequently, we face the challenge: based only on partial information,  can we achieve some desirable objective, and if so how?

That question is at the core of many matrix completion problems. Recall that a \emph{partial matrix} is one in which only some of the entries are specified, while a matrix that arises by assigning values to each of the unspecified entries is known as a \emph{completion} of the partial matrix. 
There is a 
wealth of literature on matrix completion problems, often focusing on   completions that possess some desirable property (e.g. positive definiteness  or total nonnegativity) or that optimize some scalar function (e.g. rank or determinant). See \cite{hla, Jo} for example, for overviews of the literature on matrix completion problems. 


In this paper we adopt the perspective of completion problems to the topic of stochastic matrices. Recall that a square matrix is \emph{stochastic} if it is entrywise nonnegative, and each row sum is equal to $1$. Any stochastic matrix has $1$ eigenvalue, with $\ones,$ the all-ones vector of the appropriate order, as a corresponding eigenvector. Stochastic matrices are central to the study of Markov chains,  a rich and widely applied class of stochastic processes. Each partial matrix $P$ that we will consider satisfies the following properties: \\
i) $P$ is square;
ii) all specified entries of $P$ are nonnegative;  
iii) in each row of $P$,  the sum of the specified entries is bounded above by $1,$ and is equal to $1$ for any fully specified row;
iv) any row of $P$ for which the sum of the specified entries is equal to $1$ is in fact fully specified;
v) any row of $P$ that is not fully specified contains at least two unspecified entries.  \\
We say that a partial matrix $P$ satisfying i)--v) is a \emph{partial stochastic matrix}. 
Evidently any partial stochastic matrix has a completion to a stochastic matrix. We note that conditions iv) and v) above are imposed largely to avoid uninteresting cases. If there is a row of $P$ for which the sum of the specified entries is equal to $1$, then any stochastic completion necessarily assigns the value $0$ to the unspecified entries in that row (so those positions of $P$ are specified de facto). Similarly, if the $j$-th row of $P$ has just one unspecified entry, say in the $k$-th position  then any stochastic completion  assigns the value $1-\sum_{\ell \ne k}p_{j,\ell}$ to the $(j,k)$ position (so that the entire $j$-th row of $P$ is specified de facto). 


Next, we briefly discuss Kemeny's constant (which first appeared in \cite{KS}), the scalar function that will be minimized in this paper. Given an irreducible $n\times n$ stochastic matrix $T$ and indices $j,k=1, \ldots, n,$ the entry $t_{j,k}$ is the probability that the Markov chain transitions from state $j$ to state $k$ in one time step. Recall that there 	is a unique positive left eigenvector $w^\top$ whose entries sum to $1$ such that $w^\top T=w^\top$; that vector is known as the \emph{stationary vector} for the corresponding Markov chain. For each pair of indices $j,k=1, \ldots, n,$ the \emph{mean first passage time from j to k}, $m_{j,k}$ is the expected number of steps for the Markov chain to enter state $k$ for the first time, given that it began in state $j$ (we note in passing that $m_{j,j}=\frac{1}{w_j}$ for each such $j$). \emph{Kemeny's constant}, denoted $\K(T),$ is then given by $\sum_{k \ne j} m_{j,k}w_k$. It turns out  that $\K(T)$ does not depend on the choice of $j$, and $\K(T)$ can be interpreted  in terms of the expected number of steps required for the Markov chain to reach a randomly chosen state, starting from state $j$. Since  $\sum_{j=1}^n w_j=1$, we find that $\K(T) = \sum_{k,j, k \ne j} w_jm_{j,k}w_k.$ Hence $\K(T)$ also reflects the expected number of steps required to transition from a randomly chosen initial state to a randomly chosen destination state, and so measures the overall ease of movement from state to state. 
For this reason, Kemeny's constant has been proposed as an indicator of the  average travel time in a Markov chain model for vehicle traffic networks \cite{CKS}. 

There are several different  formulas for Kemeny's constant, and each has proven itself to be useful depending on the context. Setting $Q=I-T$ we find that $Q$ is singular, and when $T$ is irreducible, $Q$  has $0$ as an eigenvalue of algebraic multiplicity one. Hence, while $Q$ is not invertible, it has a group inverse,  $Q^\#$, and it transpires that $\K(T)= \trace (Q^{\#})$  (see \cite{KN} for background on the group inverse and its use in the analysis of Markov chains).  From this we find readily that if $T$ has eigenvalues $1=\lambda_1, \lambda_2, \ldots, \lambda_n$ (including multiplicities), then $\K(T)= \sum_{j=2}^n \frac{1}{1-\lambda_j}$ (\cite{ll}), and that if $Q=XY$ is a full rank factorization of $Q$, then since $Q^\#=(X(YX)^{-2}Y,$ we have  $\K(T)=\trace (X(YX)^{-2}Y) = \trace ((YX)^{-1}).$ 
We note that while Kemeny's constant was originally defined for the case that $T$ is irreducible (so that all of the mean first passage times are well-defined), the definition can be extended to cover the case that $T$ has $1$ as an algebraically simple eigenvalue. In that setting we may use any of $\trace (Q^{\#}), \trace ((YX)^{-1}), $ and $
\sum_{j=2}^n \frac{1}{1-\lambda_j}$ as the definition for $\K(T).$ 

For any stochastic matrix $T$, 
the multiplicity of $1$ as an eigenvalue can be understood from a combinatorial viewpoint. 
Recall that a state $j$ of an $n$-state Markov chain is \emph{essential} if it has the property that, for any vertex $k$ in the directed graph of $T$,  if there is a walk from $j$ to $k$ then necessarily there is a walk back from $k$ to $j$. The essential states can thus be  partitioned into  \emph{essential classes}, i.e. subsets of states such that a) within each class any state can be reached from any other state in the class, and b) no state from  
outside the class can be reached from any state inside the class. (See \cite{sen}, for example, for further details.)  It is well-known that the algebraic multiplicity of $1$ as an eigenvalue of the corresponding transition matrix coincides with the number of essential classes. Hence, $1$ is an algebraically simple eigenvalue of a stochastic matrix $T$ if and only if  there is a single essential class of states in the Markov chain associated with $T$. 

In view of the interpretation of Kemeny's constant as a measure of the  ease of movement between states, there is interest in identifying  stochastic matrices for which the corresponding value of Kemeny's constant is small. In \cite{hunter} it is shown that for any irreducible  stochastic matrix of order $n$, Kemeny's constant is  bounded below by $\frac{n-1}{2}$; \cite{kfast} proves that the adjacency matrix of a directed $n$-cycle is the unique irreducible  stochastic matrix of order $n$ with  $\frac{n-1}{2}$ as the value for Kemeny's constant. In a related vein, 
 \cite{kfast} identifies the stochastic matrix $T$ whose directed graph is subordinate to a given directed graph and has the smallest value for $\K(T),$ while \cite{Kstat}  identifies the stochastic matrices with prescribed stationary distribution that minimize Kemeny's constant. 

In each of \cite{hunter}, \cite{kfast} and \cite{Kstat}, the minimizing matrices are \emph{sparse}, i.e. the ratio of zero entries to nonzero entries is high. At least part of our agenda in the present work is to develop a better understanding of how the sparsity of a stochastic matrix affects the value of Kemeny's constant. The following observation goes in that direction. 

\begin{observation} {\rm {Suppose that we have an irreducible stochastic matrix $T$ of order $n$, with corresponding mean first passage matrix $M$ and stationary vector $w^\top$. In 
	\cite{kczech}, the following quantity, known as an accessibility index, is defined for each $k=1, \ldots, n$: $\alpha_k = w^\top M e_k -1$. Observe that $\K(T) = \sum_{k=1}^n w_k \alpha_k.$  
	
	For each $k=1, \ldots, n,$ let $R_k$ denote the random variable that is the time of the first return to state $k$. It is well-known that $ w_k=\frac{1}{E(R_k)},$ and in \cite{kczech} it is shown that 
	$\alpha_k= \frac{1}{2}\left( \frac{E(R_k^2)}{E(R_k)} -1\right), k=1, \ldots, n.$ Hence $
	w_k \alpha_k = \frac{1}{2}\left( \frac{E(R_k^2)}{(E(R_k))^2} -w_k\right)
	= \frac{1}{2}\left( \frac{E(R_k^2)-(E(R_k))^2 }{(E(R_k))^2}\right) +\frac{1-w_k}{2} = \frac{w_k^2}{2}Var(R_k)+\frac{1-w_k}{2},$ 
	and we 
	deduce that 
	\begin{equation}\label{var} 
			\K(T) =\frac{1}{2} \sum_{k=1}^n w_k^2Var(R_k) + \frac{n-1}{2}. 
	\end{equation}
The presence of $Var(R_k)$ in \eqref{var} provides some intuition as to how sparsity plays a role, as zeros in the transition matrix $T$ may serve to decrease the variances of the first return times. Indeed, in the extreme case that $\K(T)=\frac{n-1}{2},$  $T$ has just one nonzero entry in each row and $Var(R_k)=0, k=1, \ldots, n$. 	
}}
\end{observation}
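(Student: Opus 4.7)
The plan is to derive the identity \eqref{var} by first rewriting $\K(T)$ in terms of the accessibility indices $\alpha_k$, then inserting the probabilistic formulas for $\alpha_k$ and $w_k$ that are cited from \cite{kczech}, and finally collecting terms to produce a variance.

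First, I would verify the identity $\K(T)=\sum_{k=1}^n w_k \alpha_k$. Starting from the symmetric form $\K(T) = \sum_{j\ne k} w_j m_{j,k} w_k$ recorded in the introduction, reorder the summation as $\sum_k w_k \sum_{j\ne k} w_j m_{j,k}$. Using the well-known relation $m_{k,k} = 1/w_k$, so that $w_k m_{k,k} = 1$, the inner sum equals $w^\top M e_k - w_k m_{k,k} = \alpha_k + 1 - 1 = \alpha_k$. This yields $\K(T) = \sum_k w_k \alpha_k$.

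Next, I would substitute the two cited identities $w_k = 1/E(R_k)$ and $\alpha_k = \tfrac{1}{2}\bigl(E(R_k^2)/E(R_k) - 1\bigr)$ into $w_k \alpha_k$, obtaining
\[
w_k \alpha_k \;=\; \frac{1}{2}\!\left(\frac{E(R_k^2)}{(E(R_k))^2} - \frac{1}{E(R_k)}\right) \;=\; \frac{1}{2}\!\left(\frac{E(R_k^2)}{(E(R_k))^2} - w_k\right).
\]
The key algebraic recognition is that $E(R_k^2)/(E(R_k))^2 = \bigl(Var(R_k) + (E(R_k))^2\bigr)/(E(R_k))^2 = w_k^2\, Var(R_k) + 1$, so that
\[
w_k \alpha_k \;=\; \tfrac{1}{2} w_k^2\, Var(R_k) \;+\; \tfrac{1-w_k}{2}.
\]

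Finally, summing over $k=1,\dots,n$ and using $\sum_{k=1}^n w_k = 1$ to evaluate $\sum_k (1-w_k) = n-1$ produces the claimed formula \eqref{var}. There is no real obstacle here: once the two moment identities for $\alpha_k$ and $w_k$ are accepted from \cite{kczech}, the proof is pure bookkeeping. The only point that merits a moment of care is the first step, since passing from the one-sided expression $\K(T)=\sum_{k\ne j} m_{j,k} w_k$ to the doubly-weighted form hinges on the independence of that sum from $j$ (equivalently, on the identity $m_{k,k}=1/w_k$).
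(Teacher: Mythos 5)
Your proposal is correct and follows essentially the same route as the paper: express $\K(T)=\sum_k w_k\alpha_k$, substitute the cited identities $w_k=1/E(R_k)$ and $\alpha_k=\tfrac12\bigl(E(R_k^2)/E(R_k)-1\bigr)$, recognize the variance, and sum using $\sum_k w_k=1$. The only difference is that you spell out the verification of $\K(T)=\sum_k w_k\alpha_k$ via $w_km_{k,k}=1$, which the paper simply asserts; the computation is otherwise identical.
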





In this paper we consider the following class of questions. Given a partial stochastic matrix $P$, over all stochastic completions of $P$, 
what is the minimum value of Kemeny's constant, and which completions yield that minimum? In section \ref{prelim} we show that, provided that $P$ admits a completion for which Kemeny's constant is well-defined, 
  there is a completion that minimizes Kemeny's constant and has the property that in each row, at most one of the unspecified entries is assigned a positive value. In particular, such a minimizing completion is as sparse as possible. In the remaining sections we solve two specific completion problems; section \ref{sec:diag} addresses the completion problem for the case that the partial stochastic matrix has  its specified entries on the diagonal, while section \ref{sec:row} covers the case that the partial stochastic matrix has  its specified entries in a common row. The solutions to both completion problems correspond to sparse transition matrices, as  anticipated by the results of section \ref{prelim}. Both completion problems help to explain how   local information (probability of transition back to the same state, and probabilities of transitions out of a single state, respectively) influences the global behaviour of the Markov chain as measured by Kemeny's constant.

We note that the techniques introduced in \cite{kfast} inform much of our approach. Indeed, some of the results in section \ref{prelim} are modest adaptations of the results in that paper. This is not so surprising, as the main problem in \cite{kfast} is to minimize Kemeny's constant over all stochastic matrices whose directed graph is a subgraph of a given directed graph $D$. This in turn can be rephrased as a completion problem: the relevant partial matrix has a prescribed entry $0$ in position $(j,k)$ for each pair of indices $j,k$ such that the arc $j \rightarrow k$ is not present in $D$. Evidently the completions of that partial matrix correspond to stochastic matrices with directed graphs subordinate to $D$.


Throughout we rely on a mix of combinatorial and matrix analytic techniques. We  also employ basic results and terminology  from nonnegative matrix theory and the study of Markov chains. The  reader is referred to \cite{sen} and \cite{KS} for the necessary background.


\section{Preliminaries}\label{prelim}






Our interest in this paper is Kemeny's constant, which is only well-defined for a stochastic matrix having a single essential class of indices. For this reason, we first identify the partial stochastic matrices for which at least one completion has a single essential class of indices. 
Here we use the notation that for a matrix $A$ and nonempty subsets $X, Y$ of its row and column indices respectively, $A[X,Y]$ denotes the submatrix of $A$ on rows indexed by $X$ and columns indexed by $Y$. 

\begin{proposition}
	\label{prop:admit} 
	Suppose that $P$ is a partial stochastic matrix of order $n$. There is a completion of $P$  having a single essential class of indices if and only if for each non-empty $X \subset \{1, \ldots, n\},$ one of $P[X, X^c]$ and $P[X^c,X]$ contains either a positive entry or an unspecified entry. 
\end{proposition}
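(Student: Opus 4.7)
The plan is to prove the two directions separately: the forward direction is a short contrapositive, while the reverse direction requires constructing an explicit completion.

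For the forward direction, I would assume the condition fails, so there is a proper nonempty $X \subset \{1,\ldots,n\}$ for which both $P[X,X^c]$ and $P[X^c,X]$ consist entirely of specified zero entries. Then every completion $T$ of $P$ satisfies $T[X,X^c]=0$ and $T[X^c,X]=0$, so after simultaneously reordering rows and columns to list the indices of $X$ first, $T$ is the direct sum $T[X,X]\oplus T[X^c,X^c]$. Each diagonal block is itself a stochastic matrix of positive order, and every stochastic matrix has at least one essential class, so $T$ has at least two essential classes. Hence no completion can have a single essential class, establishing the contrapositive.

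For the reverse direction, I would produce a specific completion and verify the property. Let $D$ denote the possibility digraph on $\{1,\ldots,n\}$ in which $j\to k$ is an arc precisely when $P_{jk}$ is either a specified positive entry or an unspecified entry. Define a completion $T^+$ by giving every unspecified entry a positive value; concretely, in each non-fully-specified row split the residual row-sum equally among the unspecified positions, which by condition (v) number at least two so the assigned values are well-defined and positive. Then the directed graph of $T^+$ coincides with $D$, so it suffices to show that under the hypothesis $D$ has a unique sink strongly connected component. Given any sink SCC $E$ of $D$, the absence of outgoing arcs from $E$ in $D$ means that $P[E,E^c]$ has no positive and no unspecified entries, so is entirely specified zero; the hypothesis applied to $X=E$ then forces $P[E^c,E]$ to contain a positive or unspecified entry, i.e. an arc into $E$ from outside in $D$.

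The main obstacle I anticipate is the uniqueness step: ruling out two disjoint sink SCCs of $D$. The hypothesis directly controls only complementary pairs $(X,X^c)$, whereas two disjoint sink SCCs need not partition $\{1,\ldots,n\}$. To close this gap, I would consider auxiliary subsets $X$ built from the putative sinks---for example the union of all sink SCCs of $D$, or the closure under reverse reachability of a chosen sink---so that $P[X,X^c]$ is forced to be specified zero by the lack of outgoing arcs, and then extract a contradiction by confronting the hypothesis on $P[X^c,X]$ with the structural fact that a second sink SCC is not reachable from $X$ inside $D$. Condition (v) would also play a role here, ensuring enough flexibility in unspecified rows to realize the required connectivity in $T^+$.
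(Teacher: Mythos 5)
Your forward direction is correct and is just the contrapositive of the argument the paper gives: if both $P[X,X^c]$ and $P[X^c,X]$ consist entirely of specified zeros, every completion is permutationally similar to a direct sum $T[X,X]\oplus T[X^c,X^c]$, each summand is stochastic and so contributes at least one essential class, and hence no completion has a single essential class. That half is complete.

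The genuine gap is exactly the one you flagged: uniqueness of the terminal (sink) strongly connected component of the possibility digraph $D$, and none of the repairs you sketch will close it. If $E_1,E_2$ are two sink components and $X$ is the reverse-reachability closure of $E_1$, then $P[X^c,X]$ is indeed forced to be all specified zeros, but the hypothesis then only supplies an arc from $X$ into $X^c$, which is perfectly compatible with $E_2$ lying inside $X^c$: the hypothesis constrains only \emph{complementary} pairs, whereas two disjoint closed sets need not be complementary. In fact the implication you are trying to prove fails as stated. Consider
\[
P=\begin{bmatrix} 0 & \ast & \ast \\ 0 & 1 & 0 \\ 0 & 0 & 1\end{bmatrix},
\]
where $\ast$ denotes an unspecified entry. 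This satisfies conditions i)--v), and for every nonempty proper $X\subset\{1,2,3\}$ at least one of the two off-diagonal blocks contains one of the unspecified positions $(1,2)$ or $(1,3)$, so the hypothesis of the proposition holds; yet every completion has first row $(0,a,1-a)$ and keeps states $2$ and $3$ absorbing, so every completion has two essential classes. Thus the obstacle you ran into is not a missing trick but an unprovable step. For comparison, the paper's own proof takes the same maximal completion you construct (all unspecified entries positive) and asserts that a single essential class ``follows readily''; your analysis isolates precisely where that assertion needs more than the stated hypothesis. The condition that actually characterizes the existence of a completion with a single essential class is the stronger one obtained by ranging over disjoint, not necessarily complementary, pairs: for all disjoint nonempty $X,Y\subset\{1,\ldots,n\}$, at least one of $P[X,X^c]$ and $P[Y,Y^c]$ contains a positive or unspecified entry (equivalently, $D$ has no two disjoint closed sets, i.e.\ a unique terminal class); with that hypothesis your argument for the reverse direction goes through.
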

\begin{proof}
	We first note that  a stochastic matrix $T$ has a single essential class if and only if for each non-empty proper subset $X$ of its index set, either $T[X,X^c]$ or $T[X^c,X]$ has a positive entry. 
	
	Let $P$ be a partial stochastic matrix and suppose that $T$ is a completion with a single essential class of indices. Fix a non-empty subset $X \subset \{1, \ldots, n\}$. Then one of $T[X,X^c]$ and $T[X^c,X]$ has a positive entry, from which we conclude that  one of $P[X, X^c]$ and $P[X^c,X]$ contains either a positive entry or an unspecified entry.  
	
	Conversely, suppose that for each non-empty $X \subset \{1, \ldots, n\},$ one of $P[X, X^c]$ and $P[X^c,X]$ contains either a positive entry or an unspecified entry. Let $T$ be a completion of $P$ having the largest number of positive entries; from properties i)--v) we find that $T$ assigns a positive number to each unspecified entry in $P$. It now follows readily that $T$  has a single essential class of indices. 
\end{proof}

For a partial stochastic matrix $P$, let $$SC(P)=\{T| T {\mbox{ is a stochastic  completion of }} P \},$$  and in the case that there is a stochastic completion of $P$ with a single essential class, we define $m(P)= \inf \{\K(T)| T \in SC(P) {\mbox{ and has a single essential class}}\}.$  In this section we establish some technical lemmas, and prove a general result (Proposition \ref{entries} below) regarding completions that realize $m(P).$ The following result is a slight adaptation of 
Lemma 2.3 in \cite{kfast}. We omit the proof as it is essentially found in \cite{kfast}. 

\begin{lemma}\label{eigbd} Let $T$ be a stochastic matrix of order $n$ having $1$ is an algebraically simple eigenvalue. If $\lambda \ne 1$ is an eigenvalue of $T$, then $|1-\lambda| \ge \frac{1-\cos(\frac{2\pi}{n})}{\K(T)}.$ 
\end{lemma}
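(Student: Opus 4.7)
The natural starting point is the spectral representation
\[
\K(T)=\sum_{j=2}^{n}\frac{1}{1-\lambda_j},
\]
valid whenever $1$ is an algebraically simple eigenvalue of $T$ (here $1=\lambda_1,\lambda_2,\ldots,\lambda_n$ are the eigenvalues of $T$ with multiplicity). Since $T$ is real, the non-real eigenvalues come in complex-conjugate pairs; since $T$ is stochastic, every $\lambda_j$ lies in the closed unit disk. The elementary identity $|\mu|^2\le 1\ \Rightarrow\ \mathrm{Re}(1-\mu)\ge \tfrac12|1-\mu|^2$ (with equality iff $|\mu|=1$) yields $\mathrm{Re}(1/(1-\mu))\ge 1/2$ for every non-unit eigenvalue $\mu$---the same ingredient that produces Hunter's bound $\K(T)\ge (n-1)/2$.

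First I would isolate the contribution of the chosen eigenvalue $\lambda$ in the spectral sum (pairing $\lambda$ with its conjugate $\bar\lambda$ when $\lambda$ is non-real), retain only the $\tfrac12$ lower bound on each of the remaining $n-2$ or $n-3$ terms, and then multiply the inequality through by $|1-\lambda|$. Doing so leaves a one-variable expression in $r=|1-\lambda|$ whose minimisation should yield the desired constant: the $\lambda$-contribution becomes $\frac{2\,\mathrm{Re}(1-\lambda)}{|1-\lambda|}=2\cos(\arg(1-\lambda))$ in the complex case (or $1$ if $\lambda$ is real), and the geometric constraint $|\lambda|\le 1$ couples $\mathrm{Re}(1-\lambda)$ to $|1-\lambda|$, so the full bound depends on $r$ alone in the critical case $|\lambda|=1$.

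The main obstacle will be confirming that the one-variable minimum produced this way is exactly $1-\cos(2\pi/n)=2\sin^2(\pi/n)$, rather than a weaker value. The extremal configuration is the directed $n$-cycle, whose non-unit eigenvalues are the primitive $n$-th roots of unity: for $\lambda=e^{\pm 2\pi i/n}$ we have $|1-\lambda|=2\sin(\pi/n)$ and $\K(T)=(n-1)/2$, so $\K(T)|1-\lambda|=(n-1)\sin(\pi/n)$ meets the bound through the identity $2\sin^2(\pi/n)=1-\cos(2\pi/n)$ together with the evident inequality $n-1\ge 2\sin(\pi/n)$. Once this critical value of $r$ is identified the general case follows by monotonicity. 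As the excerpt signals, the detailed bookkeeping is a slight adaptation of the proof of Lemma 2.3 in \cite{kfast}---where the irreducibility hypothesis is used only to guarantee the validity of the spectral formula for $\K(T)$, and so carries over to the present setting in which $1$ is merely algebraically simple.
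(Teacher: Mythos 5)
Your overall plan --- isolate the term $\frac{1}{1-\lambda}$ (paired with its conjugate when $\lambda$ is non-real), bound each remaining term of the spectral sum below by $\tfrac12$, and then minimize over $r=|1-\lambda|$ --- does not close, and the gap is not mere bookkeeping. The only coupling between $\mathrm{Re}(1-\lambda)$ and $|1-\lambda|$ that you invoke is the one coming from $|\lambda|\le 1$, namely $\mathrm{Re}(1-\lambda)\ge \tfrac12|1-\lambda|^2$, and that bound degenerates as $\lambda\to 1$. Feeding it into your inequality gives, in the non-real case, $\K(T)\,r \ge \tfrac{n-3}{2}r + 2\cos(\arg(1-\lambda)) \ge \tfrac{n-3}{2}r + r = \tfrac{n-1}{2}r$, whose infimum over $r\in(0,2]$ is $0$, not $1-\cos(2\pi/n)$. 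Put differently, the closed unit disk is scale-free near $1$ and carries no information about $n$, so no one-variable minimization subject only to $|\lambda|\le 1$ can produce an $n$-dependent constant: the $\tfrac{n-2}{2}$ or $\tfrac{n-3}{2}$ leftover terms are multiplied by $r$ and vanish in the limit. The indispensable missing ingredient is a localization of the eigenvalues of an $n\times n$ stochastic matrix near $1$; the constant $1-\cos(2\pi/n)=\tfrac12\,|1-e^{2\pi i/n}|^2$ is tied to the chord of the unit circle from $1$ to $e^{\pm 2\pi i/n}$, i.e.\ to a Dmitriev--Dynkin-type fact that eigenvalues with argument of modulus at most $2\pi/n$ lie in the triangle with vertices $0$, $1$, $e^{\pm 2\pi i/n}$. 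That fact yields the uniform estimate $\cos(\arg(1-\lambda))\ge\sin(\pi/n)$ (rather than your $\ge |1-\lambda|/2$), after which your pairing argument does go through for such $\lambda$, while eigenvalues of larger argument are handled separately because $|1-\lambda|$ is then bounded below by an absolute trigonometric quantity. No input of this kind appears in your proposal.

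Two further points. The directed $n$-cycle is not an extremal configuration for this inequality: there $\K(T)\,|1-\lambda|=(n-1)\sin(\pi/n)$, strictly larger than $2\sin^2(\pi/n)$, so it verifies the bound in one instance but cannot anchor a minimization. And your real-eigenvalue case gives only $\K(T)\,|1-\lambda|\ge 1$, which dominates $1-\cos(2\pi/n)$ only for $n\ge 4$; for $n=3$ the target constant is $\tfrac32>1$, and a stochastic matrix with $1$ algebraically simple and spectrum near $\{1,\,0.9,\,0.5\}$ has $\K(T)$ near $12$ while $|1-\lambda|$ is near $0.1<\tfrac{1.5}{12}$, so no sharpening of your estimates will rescue that case. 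The small-$n$ situation therefore requires separate treatment rather than an appeal to monotonicity, and the ``detailed bookkeeping'' you defer is in fact where the entire content of the lemma lies.
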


 The proof of the next result is basically the same as that of  Lemma 2.4 in \cite{kfast} (but uses Lemma \ref{eigbd} above). 
 
\begin{lemma}
	\label{min} Let $P$ be a partial stochastic matrix for which there is a stochastic completion having a single essential class of indices. 
	There is a $T \in SC(P)$ having a single essential class such that $\K(T) = m(P).$ 
\end{lemma}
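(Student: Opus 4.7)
The plan is to realize $m(P)$ as the minimum of $\K$ over a suitable compact set by a standard compactness-plus-continuity argument, with Lemma \ref{eigbd} serving to rule out the only thing that could go wrong in the limit.

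First I would note that $SC(P)$ is a compact subset of $\reals^{n\times n}$: it is defined by finitely many linear equalities (the specified entries of $P$ and the row-sum conditions) together with nonnegativity, so it is closed and bounded. Let $(T_k)_{k\in\naturals}$ be a minimizing sequence in $SC(P)$ of completions each having a single essential class, with $\K(T_k)\to m(P)$. By compactness, after passing to a subsequence we may assume $T_k \to T^*$ for some $T^* \in SC(P)$.

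The crux of the argument is showing that $T^*$ also has a single essential class, i.e.\ that $1$ is an algebraically simple eigenvalue of $T^*$. Here is where Lemma \ref{eigbd} does the work. Since $\K(T_k) \to m(P)$, the sequence $\K(T_k)$ is bounded, say by some constant $C$. Applying Lemma \ref{eigbd} to each $T_k$, every non-$1$ eigenvalue $\lambda$ of $T_k$ satisfies
\[
|1-\lambda|\ \ge\ \frac{1-\cos(2\pi/n)}{\K(T_k)}\ \ge\ \frac{1-\cos(2\pi/n)}{C}.
\]
By continuity of the roots of a monic polynomial in its coefficients, the eigenvalues of $T_k$ (with multiplicities, in some order) converge to those of $T^*$. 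In particular, at most one eigenvalue of $T^*$ can equal $1$, for otherwise we could extract from the $T_k$ two sequences of eigenvalues both converging to $1$, contradicting the uniform lower bound above for at least one of them. Combined with the fact that $T^*$ is stochastic (so $1$ is an eigenvalue), this shows that $1$ is an algebraically simple eigenvalue of $T^*$, so $T^*$ has a single essential class.

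Finally, I would conclude by continuity of $\K$. Using $\K(T)=\sum_{j=2}^n \tfrac{1}{1-\lambda_j(T)}$ (which is valid whenever $1$ is a simple eigenvalue), together with the fact that the non-$1$ eigenvalues of $T_k$ converge to the non-$1$ eigenvalues of $T^*$ and are uniformly bounded away from $1$, we obtain $\K(T_k) \to \K(T^*)$. Hence $\K(T^*) = m(P)$ and $T^* \in SC(P)$ has a single essential class, as required. The only real obstacle is Step~2 above—ensuring the limit does not pick up a second essential class in which case Kemeny's constant would blow up or be undefined—and this is precisely what the eigenvalue gap in Lemma \ref{eigbd} prevents.
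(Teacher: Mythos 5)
Your argument is correct and is essentially the proof the paper has in mind: the paper omits the details, citing Lemma 2.4 of \cite{kfast}, whose argument is exactly this compactness-of-$SC(P)$ plus minimizing-sequence scheme, with Lemma \ref{eigbd} supplying the uniform eigenvalue gap that keeps $1$ algebraically simple in the limit and makes $\K$ continuous along the sequence. No changes needed.
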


The following result is a slight extension of Lemma 2.5 in \cite{kfast}. We include a short proof in order to illustrate a key technique. 

\begin{proposition}\label{entries} 
	Let $P$ be a partial stochastic matrix for which there is a stochastic completion having a single essential class of indices.
	There is a $T \in SC(P)$ such that\\
	i) $T$ has a single essential class, \\
	ii)  $\K(T) = m(P),$  and \\
	iii) for each $j=1, \ldots, n$ there is at most one $k$ such that  $p_{j,k}$ is unspecified and $t_{j,k}>0.$ 
\end{proposition}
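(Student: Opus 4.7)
The plan is to take the minimizer given by Lemma~\ref{min} and refine it by also minimizing the number of positive entries in unspecified positions. Formally, let $N(T)$ count the pairs $(j,k)$ for which $p_{j,k}$ is unspecified and $t_{j,k}>0$, and choose $T^* \in SC(P)$ with a single essential class, $\K(T^*)=m(P)$, and $N(T^*)$ as small as possible; this minimum exists by Lemma~\ref{min} since $N$ takes nonnegative integer values. I will argue by contradiction that $T^*$ satisfies (iii): if some row $j$ contained two unspecified positions $k_1 \neq k_2$ with $t^*_{j,k_1}, t^*_{j,k_2}>0$, I will produce another minimizer $T \in SC(P)$ with a single essential class, $\K(T)=m(P)$, and $N(T)<N(T^*)$.

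To do this I would consider the one-parameter family $T(\theta) = T^* + \theta e_j(e_{k_1}-e_{k_2})^\top$ for $\theta$ in the interval $I = [-t^*_{j,k_1},\, t^*_{j,k_2}]$. Throughout $I$ the matrix $T(\theta)$ is stochastic and lies in $SC(P)$, and in the interior of $I$ it has the same sign pattern as $T^*$, hence the same single essential class. The key computation is a Sherman--Morrison identity applied to a suitable full rank factorization: since every row of $Q^*:=I-T^*$ sums to zero, one can write $Q^* = XY$ with $Y = [I_{n-1} \mid -\ones_{n-1}]$ and $X$ the $n \times (n-1)$ matrix consisting of the first $n-1$ columns of $Q^*$. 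Because $(e_{k_1}-e_{k_2})^\top$ also sums to zero, there is $a \in \reals^{n-1}$ with $(e_{k_1}-e_{k_2})^\top = a^\top Y$, and hence $I - T(\theta) = (X - \theta e_j a^\top)Y$. Setting $B = YX$, the identity $\K(T) = \trace((YX)^{-1})$ together with the Sherman--Morrison formula yields
\[
\K(T(\theta)) = m(P) + \frac{\alpha\theta}{1-\beta\theta}, \qquad \alpha = a^\top B^{-2} Ye_j, \ \beta = a^\top B^{-1} Ye_j,
\]
for $\theta$ in the interior of $I$. Since $T^*$ is a minimizer and the right-hand side must be $\ge m(P)$ for $\theta$ of either sign near $0$, this forces $\alpha = 0$, so $\K(T(\theta)) \equiv m(P)$ on the interior of $I$.

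Finally I would push $\theta$ to an endpoint $\theta^\star$ of $I$, at which $t_{j,k_1}(\theta^\star) = 0$ or $t_{j,k_2}(\theta^\star) = 0$. The main obstacle is to confirm that $T(\theta^\star)$ retains a single essential class, for otherwise $\K(T(\theta^\star))$ is undefined and the argument breaks down. I would handle this step by contradiction via continuity of the spectrum: if $T(\theta^\star)$ had more than one essential class, then $1$ would have algebraic multiplicity at least two as an eigenvalue of $T(\theta^\star)$, so some eigenvalue $\lambda_i(T(\theta))$ with $i \ge 2$ would converge to $1$ as $\theta \to \theta^\star$, causing the corresponding term $1/(1-\lambda_i(T(\theta)))$ in the spectral expression $\K(T(\theta)) = \sum_{i\ge 2} 1/(1-\lambda_i(T(\theta)))$ to blow up. This contradicts $\K(T(\theta)) \equiv m(P)$ on the interior. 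Hence $T(\theta^\star)$ has a single essential class, lies in $SC(P)$, satisfies $\K(T(\theta^\star)) = m(P)$ by continuity, and has $N(T(\theta^\star)) \le N(T^*)-1$, contradicting the minimality of $N(T^*)$. Therefore $T^*$ satisfies (iii).
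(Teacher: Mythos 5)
Your strategy is essentially the paper's: perturb along $T(\theta)=T^*+\theta e_j(e_{k_1}-e_{k_2})^\top$, use the full rank factorization $I-T=XY$ with $Y=[\,I\mid -\ones\,]$ together with Sherman--Morrison to get $\K(T(\theta))=m(P)+\alpha\theta/(1-\beta\theta)$, force $\alpha=0$ from interior minimality, and push $\theta$ to an endpoint to zero out an entry. All of that is correct. The gap is in your justification that $T(\theta^\star)$ keeps a single essential class. You argue that otherwise some eigenvalue $\lambda_i(T(\theta))\to 1$, so ``the corresponding term $1/(1-\lambda_i)$ blows up,'' contradicting $\K(T(\theta))\equiv m(P)$. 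But only the \emph{modulus} of that term blows up. Kemeny's constant is the real sum $\sum_{i\ge 2}1/(1-\lambda_i)$, and a conjugate pair $\lambda=1-\delta e^{\pm i\phi}$ inside the unit disk contributes $2\cos\phi/\delta$; the constraint $|\lambda|\le 1$ only forces $\cos\phi\ge\delta/2$, so such a pair can approach $1$ tangentially to the unit circle (e.g.\ $\cos\phi=\delta$) while its contribution to $\K$ stays bounded at $2$. Hence boundedness of $\K(T(\theta))$ does not, by generic spectral continuity, prevent eigenvalues from converging to $1$. The assertion that it \emph{does} for stochastic matrices is precisely Lemma \ref{eigbd} (a nontrivial result imported from \cite{kfast}), which your argument needs but never invokes. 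Relatedly, you take ``an endpoint'' without excluding the case $\theta^\star=1/\beta$: there $\det(YX-\theta^\star Ye_ja^\top)=\det(YX)(1-\theta^\star\beta)=0$, so $(I-T(\theta^\star))^2=X_{\theta^\star}(YX_{\theta^\star})Y$ has rank at most $n-2$ and $1$ is a multiple eigenvalue, i.e.\ the essential class really does split at that endpoint; nothing in your write-up rules this out.

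There are two clean repairs. One is to cite Lemma \ref{eigbd}: since $\K(T(\theta))=m(P)$ on the interior, every eigenvalue $\lambda\ne 1$ satisfies $|1-\lambda|\ge (1-\cos(2\pi/n))/m(P)$ uniformly, so by continuity $1$ remains algebraically simple at both endpoints. The other is what the paper actually does: choose the endpoint according to the sign of $\beta=(e_{k_1}-e_{k_2})^\top(I-T^*)^{\#}e_j$ --- take $\theta^\star=t^*_{j,k_2}>0$ if $\beta\le 0$ and $\theta^\star=-t^*_{j,k_1}<0$ if $\beta>0$ --- so that $1-\theta^\star\beta\ge 1$. Then $YX_{\theta^\star}$ is invertible, which forces $\mathrm{rank}\bigl((I-T(\theta^\star))^k\bigr)=n-1$ for every $k$, hence $1$ is an algebraically simple eigenvalue of $T(\theta^\star)$ (equivalently, there is a single essential class), with no appeal to spectral continuity at all. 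Either fix closes the gap; as written, the proof is incomplete at exactly the step that requires the most care.
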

\begin{proof}
	From Lemma \ref{min}, there is a $T \in SC(P)$ satisfying i) and ii). If $T$ also satisfies iii), we're done, so suppose that for some indices $j, k_1, k_2$ the  $(j,k_1), (j,k_2) $ entries of $P$ are unspecified and $t_{j,k_1}, t_{j,k_2}>0.$  Our goal is to construct a matrix $\hat T$ satisfying i) and ii) such that $\hat T$ has fewer positive entries than $T$ does. The conclusion will then follow by an induction step on the number of positive entries. 
	
	For each $x \in [-t_{j,k_1}, t_{j,k_2}],$ let $E_x = x e_j(e_{k_1}-e_{k_2})^\top,$ and observe that $T+E_x \in SC(P)$ for all such $x.$ Arguing as in the proof of Lemma 2.5 of \cite{kfast}, we find that for each $x$ such that $ x(e_{k_1}-e_{k_2})^\top (I-T)^{\#}e_j \ne 1, \K(T+E_x)= \K(T) + \frac{x}{1- x(e_{k_1}-e_{k_2})^\top (I-T)^{\#}e_j}(e_{k_1}-e_{k_2})^\top ((I-T)^{\#})^2e_j$; in particular this holds whenever $|x|$ is sufficiently small. Further, when $|x|$ is sufficiently small, it is also the case that $T+E_x$ has a single essential class. Since $\K(T)$ is a minimum, it must be the case that  $(e_{k_1}-e_{k_2})^\top ((I-T)^{\#})^2e_j = 0.$ Hence $\K(T) = \K(T+E_x)$ for any $x \in [-t_{j,k_1}, t_{j,k_2}]$ such that $ x(e_{k_1}-e_{k_2})^\top (I-T)^{\#}e_j \ne 1$. Further since $\K(T+E_x)$ is a minimum, it must be the case that $T+E_x$ has a single essential class for any such $x$. If $(e_{k_1}-e_{k_2})^\top (I-T)^{\#}e_j\le 0, $ we  find that $T+E_{t_{j,k_2}}$ satisfies i) and ii) and has fewer positive entries than $T$ does, while if 
	$(e_{k_1}-e_{k_2})^\top (I-T)^{\#}e_j > 0, $ $T+E_{-t_{j,k_1}}$ satisfies i) and ii) and has fewer positive entries than $T$ does. 
\end{proof}


\section{$P$ with all specified entries on the diagonal}\label{sec:diag} 

In this section we consider the problem of finding $m(P)$ for the case that the specified entries of the partial stochastic matrix $P$ are all on the diagonal. The following technical result will be useful in the subsequent discussion. Recall that a square matrix is \emph{substochastic} if it is entrywise nonnegative and each row sum is at most $1$. 


\begin{lemma}\label{lem:trace} 
	\label{trace} Let $T$ be an $n \times n$ substochastic matrix whose  spectral radius is strictly less than $1.$ We have $\trace((I-T)^{-1}) \ge \sum_{k=1}^n \frac{1}{1-t_{k,k}}.$ 
\end{lemma}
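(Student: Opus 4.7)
The plan is to expand $(I-T)^{-1}$ as a Neumann series and compare the diagonal entries term-by-term against a single "loop" walk at each vertex.

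First, since $T$ is substochastic with spectral radius less than $1$, I would note that $t_{k,k}<1$ for every $k$ (otherwise the row-sum condition would force $e_k^\top T = e_k^\top$, contradicting $\rho(T)<1$). In particular, the scalar geometric series $\sum_{m=0}^\infty t_{k,k}^m$ converges to $\frac{1}{1-t_{k,k}}$, and the matrix Neumann series gives the convergent identity
\begin{equation*}
(I-T)^{-1} \;=\; \sum_{m=0}^\infty T^m.
\end{equation*}

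Next, I would interpret the diagonal entries combinatorially. For each $m \ge 0$ and each index $k$,
\begin{equation*}
[T^m]_{k,k} \;=\; \sum_{i_1,\ldots,i_{m-1}} t_{k,i_1}t_{i_1,i_2}\cdots t_{i_{m-1},k},
\end{equation*}
a sum of nonnegative terms indexed by closed walks of length $m$ based at $k$ (here $T\ge 0$). Among these walks is the constant walk $k\to k\to\cdots\to k$, whose contribution is exactly $t_{k,k}^m$. Hence $[T^m]_{k,k}\ge t_{k,k}^m$ for every $m\ge 0$, and summing over $m$ yields
\begin{equation*}
[(I-T)^{-1}]_{k,k} \;=\; \sum_{m=0}^\infty [T^m]_{k,k} \;\ge\; \sum_{m=0}^\infty t_{k,k}^m \;=\; \frac{1}{1-t_{k,k}}.
\end{equation*}

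Finally, summing this inequality over $k=1,\ldots,n$ gives $\trace((I-T)^{-1})\ge \sum_{k=1}^n \frac{1}{1-t_{k,k}}$, as desired. There is no real obstacle here — the only mildly nontrivial point is confirming that $t_{k,k}<1$, which is what legitimizes both the right-hand sum and the use of the Neumann expansion; once that is in hand, entrywise nonnegativity of $T$ does all the work.
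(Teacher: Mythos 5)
Your argument is correct, and it takes a genuinely different route from the paper's. The paper proves the lemma by induction on $n$: it writes $I-T$ in $2\times 2$ partitioned form, invokes the partitioned-inverse (Schur complement) formulas, and discards two nonnegative correction terms to obtain $\trace((I-T)^{-1}) \ge \trace((I-\hat T)^{-1}) + \frac{1}{1-t_{n,n}}$, after which the induction hypothesis finishes the job. You instead expand $(I-T)^{-1}=\sum_{m\ge 0}T^m$ and bound each diagonal entry of $T^m$ from below by the contribution $t_{k,k}^m$ of the constant closed walk at $k$, using only the entrywise nonnegativity of $T$. Your route is more elementary --- it avoids the partitioned-inverse identities and the implicit positivity checks on the Schur complement that the inductive argument relies on --- and it in fact establishes the stronger entrywise conclusion $[(I-T)^{-1}]_{k,k}\ge \frac{1}{1-t_{k,k}}$ for every $k$, with the trace inequality following by summation; it also makes visible exactly where slack arises, namely from closed walks at $k$ other than the loop. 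Your preliminary observation that $t_{k,k}<1$ (so that both the scalar geometric series and the right-hand side are legitimate) is handled correctly. What the paper's approach buys in exchange is continuity with the block manipulations used immediately afterwards (e.g. in Lemma \ref{XY}), but as a standalone proof of this lemma your version is complete and arguably cleaner.
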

\begin{proof}
	We proceed by induction on $n,$ and note that the case $n=1$ is readily established. 
	Suppose now that $n \ge 2$ and that the statement holds for matrices of order $n-1.$ Write $I-T$ in partitioned form as 
	$I-T= \left[\begin{array}{c|c}
		I-\hat{T}&-x  \\  \hline \\ [-9pt]-y^\top & 1-t_{n,n}
	\end{array}\right],
	$
	where $\hat T $ is of order $n-1$ and $x, y \ge 0.$ From the partitioned form of the inverse \cite{hj}, we find that 
\begin{eqnarray*}
&&	\trace((I-T)^{-1}) = \\
&&	\trace \left(\left(	I-\hat{T} -\frac{1}{ 1-t_{n,n}} xy^\top \right)^{-1} + \frac{1}{ 1-t_{n,n} - y^\top (I-\hat{T})^{-1}x} \right).
\end{eqnarray*}
	Hence 
	\begin{eqnarray*}
&&	\trace((I-T)^{-1})) = \\
&&	\trace \left((	I-\hat{T})^{-1}  +\frac{1}{ 1-t_{n,n} - y^\top(	I-\hat{T})^{-1}x } (	I-\hat{T})^{-1}xy^\top(	I-\hat{T})^{-1}\right)\\
	&&+ \frac{1}{ 1-t_{n,n} - y^\top (I-\hat{T})^{-1}x} . 
		\end{eqnarray*}
	We deduce that 
	$\trace((I-T)^{-1})) \ge \trace((I-\hat T)^{-1})) + \frac{1}{ 1-t_{n,n}},$ 
	and applying the induction hypothesis now yields the desired inequality. 
\end{proof}

\begin{lemma}\label{XY}
	Suppose that $T$ is a stochastic matrix of order $n$ with a single essential class. Suppose that  $T =\left[ \begin{array}{c|c} S & \ones - S\ones\\ \hline\\ [-9pt] u^\top&1-\ones^\top u\end{array}\right],$ where the spectral radius of $S$ is less than $1$. 
Then $$\K(T) = \trace((I-S)^{-1})-\frac{1}{1+u^\top (I-S)^{-1}\ones } \trace((I-S)^{-1}\ones u^\top (I-S)^{-1}).$$
\end{lemma}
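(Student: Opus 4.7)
The plan is to produce an explicit full rank factorization of $Q = I - T$, apply the identity $\K(T) = \trace((YX)^{-1})$ recalled in the introduction, and then invert the resulting $(n-1) \times (n-1)$ matrix using the Sherman--Morrison formula.

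First I would write
\[
Q = I - T = \left[\begin{array}{c|c} I-S & -(\ones - S\ones) \\ \hline -u^\top & \ones^\top u \end{array}\right]
\]
and observe that, since $(I-S)\ones = \ones - S\ones$, one has the factorization $Q = XY$ with
\[
X = \left[\begin{array}{c} I-S \\ -u^\top \end{array}\right], \qquad Y = \bigl[\, I \;\big|\; -\ones \,\bigr].
\]
I would then check that this is a full rank factorization: $Y$ has rank $n-1$ by inspection, and $X$ has rank $n-1$ since $I-S$ is invertible (as $\rho(S)<1$). Also, the single essential class hypothesis guarantees that $1$ is an algebraically simple eigenvalue of $T$, so $\K(T) = \trace((YX)^{-1})$ is applicable.

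Next I would compute
\[
YX = (I-S) + \ones u^\top,
\]
a rank-one perturbation of the invertible matrix $I-S$. Since $(I-S)^{-1}=\sum_{k\ge 0}S^k$ is entrywise nonnegative and $u \ge 0$, the scalar $1 + u^\top(I-S)^{-1}\ones$ is strictly positive, so Sherman--Morrison applies and gives
\[
(YX)^{-1} = (I-S)^{-1} - \frac{(I-S)^{-1}\ones u^\top (I-S)^{-1}}{1 + u^\top (I-S)^{-1}\ones}.
\]
Taking the trace of both sides yields the claimed identity.

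There is no serious obstacle; the only points that deserve a line of justification are that $X$ and $Y$ indeed constitute a full rank factorization (which is where the hypotheses $\rho(S)<1$ and "single essential class" enter), and the non-vanishing of $1+u^\top(I-S)^{-1}\ones$ needed for Sherman--Morrison. The arithmetic that produces the formula $YX = I - S + \ones u^\top$ from the block structure is the main computational step, but it is brief.
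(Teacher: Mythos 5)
Your proposal is correct and follows exactly the paper's own argument: the same full rank factorization $I-T=XY$ with $X=\left[\begin{array}{c} I-S \\ -u^\top\end{array}\right]$ and $Y=\bigl[\, I \mid -\ones\,\bigr]$, the identity $\K(T)=\trace((YX)^{-1})$, and the Sherman--Morrison formula applied to $YX=I-S+\ones u^\top$. The extra remarks you include (rank check, positivity of $1+u^\top(I-S)^{-1}\ones$) are sensible justifications that the paper leaves implicit.
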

\begin{proof}
	Write $X=\left[ \begin{array}{c} I-S  \\ \hline\\ [-9pt] -u^\top\end{array}\right],  Y=\begin{bmatrix}
		I &| -\ones
	\end{bmatrix},$ so that $I-T=XY$ is a full rank factorization of $I-T.$ As noted in section \ref{sec:intro}, $\K(T)=\trace((YX)^{-1}).$ Since $YX = I-S+\ones u^\top,$ the conclusion  follows  from the Sherman-Morrison formula \cite{hj}. 
\end{proof}

\begin{example}{\rm{Suppose that for our 
		 partial stochastic matrix $P$ of order $n$, the specified entries  are on the diagonal,  with $p_{j,j}=d_j, j=1, \ldots, n.$ 	
			As a warm-up exercise, we first consider the case that some diagonal entry of $P$ is $1$. Without loss of generality we assume that 
			 $d_1=1$ and $d_j<1, j=2, \ldots, n.$ For any stochastic completion $T$ of $P$ having just one essential class, $T$ is of the form $$T = \left[\begin{array}{c|c} 1&0^\top \\ \hline\\ [-9pt] \ones - \hat T \ones & \hat T
			\end{array}	
			\right]$$ where the spectral radius of $\hat T$ is less than $1,$ and $t_{j,j}=d_j, j=2, \ldots, n.$ 
			Then $\K(T) = \trace((I-\hat T)^{-1}) \ge \sum_{j=2}^n \frac{1}{1-d_j},$ the inequality following from Lemma \ref{lem:trace}. 
			 Evidently equality in the lower bound can be attained by letting $\hat T$ be the diagonal matrix with diagonal entries $d_2, \ldots, d_n$.  Hence $m(P)= \sum_{j=2}^n \frac{1}{1-d_j}.$  
}}	
\end{example}

In the remainder of this section we assume that $d_j<1, j=1, \ldots, n.$ 


\begin{theorem}\label{C} Suppose that $d_j \in [0,1), j=1, \ldots, n.$ 
	Let $C$ be the adjacency matrix for a directed $n$-cycle, and let $T= D+(I-D)C.$ Then 
	$$\K(T) = \frac{1}{2 \sum_{j=1}^n \frac{1}{1-d_j} }\left( \left(\sum_{j=1}^n \frac{1}{1-d_j}\right)^2 - \sum_{j=1}^n \frac{1}{(1-d_j)^2} 
	\right). 
$$
\end{theorem}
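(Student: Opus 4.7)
The plan is to apply Lemma \ref{XY} with $S$ taken to be the leading $(n-1)\times(n-1)$ principal submatrix of $T$. Because $C$ is the adjacency matrix of the directed $n$-cycle $1\to 2\to\cdots\to n\to 1$, the matrix $S$ is upper bidiagonal with $d_j$ on the diagonal ($j=1,\ldots,n-1$) and $1-d_j$ on the superdiagonal ($j=1,\ldots,n-2$), while $u^\top=(1-d_n)e_1^\top$. Since $S$ is upper triangular with diagonal entries in $[0,1)$, its spectral radius is less than $1$, so the hypotheses of Lemma \ref{XY} are satisfied; moreover, the positivity of all cycle arcs guarantees that $T$ is irreducible and hence has a single essential class.

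Next I would compute $(I-S)^{-1}$ explicitly. Factoring $I-S=\diag(1-d_j)(I-N)$, where $N$ is the $(n-1)\times(n-1)$ shift matrix with $N_{j,j+1}=1$, yields $(I-S)^{-1}=(I-N)^{-1}\diag\!\bigl(\tfrac{1}{1-d_j}\bigr)$. Since $(I-N)^{-1}$ is the upper-triangular all-ones matrix, $[(I-S)^{-1}]_{j,k}=\frac{1}{1-d_k}$ for $j\le k$ and $0$ otherwise. Writing $a_j=\frac{1}{1-d_j}$ for convenience, I then read off:
\begin{align*}
\trace((I-S)^{-1}) &= \sum_{k=1}^{n-1}a_k, \\
u^\top(I-S)^{-1}\ones &= (1-d_n)\sum_{k=1}^{n-1}a_k \;=\; \tfrac{1}{a_n}\sum_{k=1}^{n-1}a_k.
\end{align*}
For the remaining term, I use $\trace((I-S)^{-1}\ones u^\top(I-S)^{-1})=u^\top(I-S)^{-2}\ones$; since $[(I-S)^{-1}\ones]_j=\sum_{k=j}^{n-1}a_k$ and $u^\top(I-S)^{-1}$ equals $(1-d_n)$ times the first row of $(I-S)^{-1}$, this evaluates to $\tfrac{1}{a_n}\sum_{k=1}^{n-1}a_k\sum_{j=k}^{n-1}a_j$. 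The key simplification is the symmetric-sum identity $\sum_{1\le k\le j\le n-1}a_ka_j=\tfrac12\!\left(\bigl(\sum_{k=1}^{n-1}a_k\bigr)^2+\sum_{k=1}^{n-1}a_k^2\right)$.

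Substituting into Lemma \ref{XY} and setting $\Sigma_1=\sum_{j=1}^n a_j$, $\Sigma_2=\sum_{j=1}^n a_j^2$, I would finish with the identity
$$2\Sigma_1\,\K(T)\;=\;2\Sigma_1\sum_{k=1}^{n-1}a_k-\frac{a_n}{\Sigma_1}\cdot\frac{\Sigma_1}{a_n}\!\left(\bigl(\textstyle\sum_{k<n}a_k\bigr)^2+\sum_{k<n}a_k^2\right),$$
which after expanding $\Sigma_1^2-\Sigma_2=2a_n\sum_{k<n}a_k+\bigl(\sum_{k<n}a_k\bigr)^2-\sum_{k<n}a_k^2$ rearranges to the claimed $\K(T)=\tfrac{1}{2\Sigma_1}(\Sigma_1^2-\Sigma_2)$. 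No step is deep; the main obstacle is purely bookkeeping, and the content of the argument is (a) the explicit closed form of $(I-S)^{-1}$ afforded by the bidiagonal structure, and (b) the symmetric-sum identity that collapses the double sum into $\tfrac12(T_1^2+T_2)$. As a sanity check one can verify the formula reduces to $\K(T)=\tfrac{n-1}{2(1-d)}$ when all $d_j=d$ (agreeing with the eigenvalue computation for $dI+(1-d)C$), and to $\frac{1}{2-d_1-d_2}$ when $n=2$.
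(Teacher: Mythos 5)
Your proposal is correct and follows essentially the same route as the paper: both apply Lemma \ref{XY} with $S$ the leading $(n-1)\times(n-1)$ principal submatrix and $u^\top=(1-d_n)e_1^\top$, exploit the factorization $(I-S)^{-1}=(I-N)^{-1}(I-\hat D)^{-1}$ with $(I-N)^{-1}$ the upper-triangular all-ones matrix, and collapse the resulting double sum via the identity $\sum_{k\le j}a_ka_j=\tfrac12\bigl(\bigl(\sum_k a_k\bigr)^2+\sum_k a_k^2\bigr)$ before the final algebraic rearrangement. The only differences are cosmetic (notation and the order of the closing simplification), and your sanity checks are a nice addition.
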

\begin{proof}
	Without loss of generality we assume that $C$ corresponds to the cycle $1 \rightarrow 2 \ldots \rightarrow n \rightarrow 1.$ 
	We apply Lemma \ref{XY}, and note that in the notation of that lemma, 
	the submatrix of $T$ consisting of its first $n-1$ columns can be written as $\left[ \begin{array}{c}
		\hat D+(I-\hat D)N\\ \hline\\ [-9pt]  (1-d_n)e_1^\top
	\end{array}\right],$ where $N$ is the matrix of order $n-1$ whose first superdiagonal consists of ones, and where  all other entries are zero, and where $\hat D$ is the leading $(n-1)\times(n-1)$ principal submatrix of $D$.  It now follows that $\K(T)$ is the trace of the matrix 
\begin{eqnarray*}
	&&
	 (I-N)^{-1} (I-\hat D)^{-1}
	 - \left(
	 \frac{1-d_n}{1+(1-d_n)e_1^\top (I-N)^{-1} (I-\hat D)^{-1}\ones }\right) \times \\
&&	  (I-N)^{-1} (I-\hat D)^{-1}\ones   e_1^\top (I-N)^{-1} (I-\hat D)^{-1}.
\end{eqnarray*}
Observe that $(I-N)^{-1}$ is the upper triangular matrix of order $n-1$ whose entries on and above the diagonal are ones. From that observation we find the following: 
a)  $\trace((I-N)^{-1} (I-\hat D)^{-1} ) = \sum_{j=1}^{n-1} \frac{1}{1-d_j}$;
b) $\frac{1-d_n}{1+(1-d_n)e_1^\top (I-N)^{-1} (I-\hat D)^{-1}\ones } = \frac{1}{\sum_{j=1}^{n} \frac{1}{1-d_j}}$.  Recalling that for vectors $u,v\in \reals^{n-1}, \trace(uv^\top)=v^\top u,$ we find that 
$\trace((I-N)^{-1} (I-\hat D)^{-1}\ones e_1^\top (I-N)^{-1} (I-\hat D)^{-1})=$ $  \sum_{j=1}^{n-1} \frac{1}{1-d_j} \sum_{\ell =j}^{n-1} \frac{1}{1-d_\ell}.$ 

Next, we note that $ \sum_{j=1}^{n-1} \frac{1}{1-d_j} \sum_{\ell =j}^{n-1} \frac{1}{1-d_\ell}$ can be written as $$\frac{1}{2}\left(\left (\sum_{j=1}^{n-1}\frac{1}{1-d_j}  \right)^2 + \sum_{j=1}^{n-1}\frac{1}{(1-d_j)^2}   \right).$$ Assembling these observations, we find that 
$$\K(T) = \sum_{j=1}^{n-1}\frac{1}{1-d_j} - \frac{(\sum_{j=1}^{n-1}\frac{1}{1-d_j}  )^2 + \sum_{j=1}^{n-1}\frac{1}{(1-d_j)^2}  }{2(\sum_{j=1}^{n} \frac{1}{1-d_j})}.$$
Simplifying that expression now yields 
\begin{flalign*}
&	\K(T)  =
	\frac{1}{2(\sum_{j=1}^{n} \frac{1}{1-d_j})} \left(
	\left(\sum_{j=1}^{n-1}\frac{1}{1-d_j}  \right)^2 + \frac{2}{1-d_n} \sum_{j=1}^{n-1} \frac{1}{1-d_j} -\sum_{j=1}^{n-1}\frac{1}{(1-d_j)^2}
	 \right) \\ 
	 &=  \frac{1}{2 \sum_{j=1}^n \frac{1}{1-d_j} }\left( \left(\sum_{j=1}^n \frac{1}{1-d_j}\right)^2 - \sum_{j=1}^n \frac{1}{(1-d_j)^2} 
	 \right).
\end{flalign*}
\end{proof}
 
 \begin{lemma}\label{xj} 
 	Suppose that $n \ge 2$ and  $x_j>0, j=1, \ldots, n,$ and consider 
 	$$s_k = \frac{(\sum_{j=1}^k x_j)^2- \sum_{j=1}^k x_j^2}{2\sum_{j=1}^k x_j} + \sum_{j=k+1}^n x_j.$$ We have $s_1 > s_2 > \ldots > s_n.$
 \end{lemma}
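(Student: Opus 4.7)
The plan is to introduce the abbreviations $S_k = \sum_{j=1}^k x_j$ and $Q_k = \sum_{j=1}^k x_j^2$, which let us rewrite
\[
s_k \;=\; \frac{S_k^2 - Q_k}{2 S_k} + (S_n - S_k) \;=\; S_n - \frac{S_k}{2} - \frac{Q_k}{2 S_k}.
\]
In this form, the dependence of $s_k$ on $k$ is isolated into the two subtracted terms, and proving monotonicity reduces to showing that $\tfrac{S_k}{2} + \tfrac{Q_k}{2S_k}$ is strictly increasing in $k$.

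Next I would compute $s_k - s_{k+1}$ directly, using $S_{k+1}=S_k+x_{k+1}$ and $Q_{k+1}=Q_k+x_{k+1}^2$. Collecting the three resulting fractions over the common denominator $2 S_k S_{k+1}$, a short manipulation should give
\[
s_k - s_{k+1} \;=\; \frac{x_{k+1}}{2 S_k S_{k+1}}\bigl(S_k^2 + 2 x_{k+1} S_k - Q_k\bigr).
\]
The prefactor $\tfrac{x_{k+1}}{2 S_k S_{k+1}}$ is strictly positive since all $x_j>0$, so the sign of $s_k - s_{k+1}$ is governed by the parenthesized expression.

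Finally, to see that $S_k^2 + 2 x_{k+1} S_k - Q_k > 0$, I would invoke the elementary identity
\[
S_k^2 - Q_k \;=\; 2 \sum_{1\le i<j \le k} x_i x_j \;\ge\; 0,
\]
and note that $2 x_{k+1} S_k > 0$ because $x_{k+1}>0$ and $S_k \ge x_1 > 0$. Thus each difference $s_k - s_{k+1}$ is strictly positive, establishing the chain of strict inequalities.

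I do not anticipate any real obstacle; the only thing to be careful about is the algebraic simplification leading to the factored form of $s_k - s_{k+1}$, since sign errors there would obscure the clean positivity argument. Once that identity is in hand, the proof closes immediately from the nonnegativity of $S_k^2 - Q_k$.
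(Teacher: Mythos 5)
Your proposal is correct and follows essentially the same route as the paper: both compute $s_k - s_{k+1}$ over the common denominator $2S_kS_{k+1}$, factor out $x_{k+1}$, and conclude positivity from $S_k^2 \ge Q_k$ (the paper phrases this as $S_kS_{k+1} > Q_k$, which is the same observation after expanding $S_kS_{k+1} = S_k^2 + x_{k+1}S_k$). The algebraic identity you flag as the only delicate step does check out.
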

\begin{proof}
	Suppose that $1\le k\le n-1.$ Observe that 
	\begin{eqnarray*}
	&&	s_k-s_{k+1} =\\
		 && -\frac{1}{2}x_{k+1} + x_{k+1} + \frac{\sum_{j=1}^{k+1} x_j^2}{2\sum_{j=1}^{k+1} x_j} - 
		\frac{\sum_{j=1}^{k} x_j^2}{2\sum_{j=1}^{k} x_j}=\\
		&& \frac{1}{2}\left(x_{k+1} + \frac{\sum_{j=1}^{k} x_j  \sum_{j=1}^{k+1} x_j^2 -\sum_{j=1}^{k+1} x_j \sum_{j=1}^{k} x_j^2 }{\sum_{j=1}^{k} x_j\sum_{j=1}^{k+1} x_j} 	\right) =\\
		&& \frac{1}{2\sum_{j=1}^{k} x_j\sum_{j=1}^{k+1} x_j}\left( x_{k+1}^2\sum_{j=1}^{k} x_j + x_{k+1} \sum_{j=1}^{k} x_j \sum_{j=1}^{k+1} x_j - x_{k+1}\sum_{j=1}^{k} x_j^2 		\right) .
	\end{eqnarray*}
Since $\sum_{j=1}^{k} x_j \sum_{j=1}^{k+1} x_j >\sum_{j=1}^{k} x_j^2 ,$ we deduce that $s_k > s_{k+1}.$ 	
\end{proof}


\begin{theorem}\label{diagmain} 
	Suppose that $d_j \in [0,1), j=1, \ldots, n$ and that $P$ is an $n \times n$ partial stochastic matrix with $p_{j,j}=d_j, j=1, \ldots, n$ and all other entries unspecified. 
	Then 
	$$
	m(P)=  
	\frac{1}{2 \sum_{j=1}^n \frac{1}{1-d_j} }\left( \left(\sum_{j=1}^n \frac{1}{1-d_j}\right)^2 - \sum_{j=1}^n \frac{1}{(1-d_j)^2} 
	\right).   
$$
\end{theorem}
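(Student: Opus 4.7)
The plan is to prove the lower bound $m(P)\ge$ (formula), since Theorem \ref{C} already supplies the matching upper bound via the cycle completion $T=D+(I-D)C$. I would start by invoking Proposition \ref{entries} and combining it with the observation that every row of $P$ has only its diagonal entry specified: this forces any minimising completion $T^\ast$ to have \emph{exactly} one positive off-diagonal entry per row, of value $1-d_j$ (using $d_j<1$). Hence $T^\ast = D + (I-D)K$ for a $0$-$1$ matrix $K$ whose support is the graph of a fixed-point-free function $k\colon\{1,\dots,n\}\to\{1,\dots,n\}$.

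The next step is to unpack the single-essential-class hypothesis combinatorially. For a functional-graph transition matrix, the essential classes correspond precisely to the cycles of $k$, so $k$ must have a unique cycle, supported on some $S\subseteq\{1,\dots,n\}$ of size $\ell\ge 2$. Indices in $S^c$ then form a forest of ``feeder trees'' draining into $S$. Choosing a topological order on $S^c$ that precedes $S$, I would observe that $T^\ast$ is block upper triangular with diagonal blocks $T^\ast[S^c,S^c]$ (itself upper triangular, diagonal $\{d_j\}_{j\in S^c}$, spectral radius $<1$) and $T^\ast[S,S]$, which matches the hypotheses of Theorem \ref{C} on $\ell$ states.

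Using the eigenvalue-sum formula $\K(T^\ast)=\sum_{\lambda\ne 1}(1-\lambda)^{-1}$, the block triangular decomposition gives
\[
\K(T^\ast) = \K\bigl(T^\ast[S,S]\bigr) + \sum_{j\in S^c}\frac{1}{1-d_j}.
\]
Substituting Theorem \ref{C} for the first summand and writing $x_j=(1-d_j)^{-1}$, the right-hand side becomes precisely the quantity $s_\ell$ of Lemma \ref{xj} after relabelling the indices so that $S$ occupies positions $1,\dots,\ell$. Lemma \ref{xj} then yields $s_\ell \ge s_n$, which coincides with the target formula (symmetric in the $x_j$'s, so the relabelling is immaterial). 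This gives $m(P)=\K(T^\ast)\ge s_n$ and closes the argument.

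The main obstacle I anticipate is the clean spectral decomposition in the second paragraph: one must combine three separate observations (the fixed-point-free functional-graph structure forced by Proposition \ref{entries}, the fact that essential classes of such matrices are exactly the cycles of the function, and the triangular structure of the inessential block under a topological ordering) before Theorem \ref{C} and Lemma \ref{xj} can do their work.
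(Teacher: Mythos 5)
Your proposal is correct and follows essentially the same route as the paper: reduce to one positive off-diagonal entry per row via Proposition \ref{entries}, observe that the resulting functional digraph must have a unique cycle (else more than one essential class), compute Kemeny's constant as the cycle contribution from Theorem \ref{C} plus $\sum_{j\in S^c}\frac{1}{1-d_j}$, and rule out cycles of length less than $n$ with Lemma \ref{xj}. The only difference is that you make explicit the block-triangular/eigenvalue-sum justification for the splitting of $\K(T)$, which the paper states without proof; your phrasing that Proposition \ref{entries} ``forces any minimising completion'' to be functional is slightly stronger than what that proposition gives (it guarantees the existence of one such minimiser, which suffices), but this does not affect the argument.
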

\begin{proof}
	Suppose that $T$ is a completion of $P$ such that $\K(T)=m(P).$  By Proposition \ref{entries}, we may assume without loss of generality that $T$ has precisely one nonzero off-diagonal entry in each row, and a single essential class of indices.  
	In the directed graph of $T,$ for each vertex $j$ there is precisely one  vertex $\ell \ne j$ such that $j \rightarrow \ell.$ Hence  this directed graph has 
	at least one cycle of length greater than $1$, and if it were to have more than one 
 	such cycle then there would be more than one essential class. 
	  We deduce that the directed graph of $T$ has precisely one cycle, of length $k$ say, and without loss of generality it is on vertices $1, \ldots, k.$ 
	
 If $k<n,$ then $\K(T) = \frac{(\sum_{j=1}^k \frac{1}{1-d_j})^2- \sum_{j=1}^k \frac{1}{(1-d_j)^2}}{2\sum_{j=1}^k \frac{1}{1-d_j}} + \sum_{j=k+1}^n \frac{1}{1-d_j}.$ Appealing to Lemma \ref{xj} (with $x_j =\frac{1}{1-d_j}$) we see that $T$ cannot minimize Kemeny's constant over $SC(P)$. Hence  $k=n,$ and the conclusion now follows from Theorem \ref{C}.
\end{proof}

\begin{remark}\label{incrj0} 
	\rm{Fix an index $j_0 =1, \ldots, n,$ and consider the expression $$f\equiv \frac{1}{ \sum_{j=1}^n \frac{1}{1-d_j} }\left( \left(\sum_{j=1}^n \frac{1}{1-d_j}\right)^2 - \sum_{j=1}^n \frac{1}{(1-d_j)^2} \right) $$ as a function of $d_{j_0}.$ A straightforward computation shows that $\frac{\partial  f}{\partial  d_{j_0}}$ has the same sign as $\left(\sum_{j \ne j_0} \frac{1}{1-d_j}\right)^2 + \sum_{j \ne j_0} \frac{1}{(1-d_j)^2}.$ In particular, $f$ is increasing in each $d_j$. 
}
\end{remark}

\begin{corollary}
	Suppose that $d_j \in [0,1), j=1, \ldots, k.$ Suppose that $P$ is a partial stochastic matrix of order $n>k$ with $p_{j,j}=d_j, j=1, \ldots, k,$ and all other entries unspecified. 
	Then $	m(P) =  $ 
$$		\frac{1}{2 (\sum_{j=1}^k \frac{1}{1-d_j} +n-k) }\left( \left(\sum_{j=1}^k \frac{1}{1-d_j} + n-k\right)^2 - \sum_{j=1}^k \frac{1}{(1-d_j)^2} -(n-k) 
		\right).   			$$ 
\end{corollary}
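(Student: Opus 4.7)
The plan is to reduce to Theorem~\ref{diagmain} by letting the unspecified diagonal positions $p_{k+1,k+1},\ldots,p_{n,n}$ play the role of free parameters, and then exploiting the monotonicity recorded in Remark~\ref{incrj0}. By Lemma~\ref{min} there is $T^* \in SC(P)$ with a single essential class such that $\K(T^*) = m(P)$; it suffices to show that $\K(T^*)$ is at least the claimed expression and that this expression is attained by some completion of $P$.

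The upper bound is the easy direction: take $D = \diag(d_1,\ldots,d_k,0,\ldots,0)$ and let $C$ be the adjacency matrix of a directed $n$-cycle. Then $T := D + (I-D)C$ is a completion of $P$ with a single essential class, and Theorem~\ref{C} applied with diagonal $(d_1,\ldots,d_k,0,\ldots,0)$ yields a value of $\K(T)$ equal to the claimed formula, since each index $j>k$ contributes $1$ apiece to both $\sum_j \tfrac{1}{1-d_j}$ and $\sum_j \tfrac{1}{(1-d_j)^2}$.

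For the lower bound I would first treat the case that every diagonal entry of $T^*$ is strictly less than $1$. Setting $d_j := t^*_{j,j}$ for $j = k+1,\ldots,n$, the matrix $T^*$ is a completion of the fully diagonally specified partial stochastic matrix $P'$ with $p'_{j,j} = d_j$ for all $j$; Theorem~\ref{diagmain} then gives $\K(T^*) \ge m(P')$, and Remark~\ref{incrj0} (monotonicity of that expression in each $d_j$) lets us replace $d_{k+1},\ldots,d_n$ by $0$ without increasing the bound, yielding the claimed formula. The remaining case is that some $t^*_{j,j}=1$, which forces $\{j\}$ to be the unique essential class; after a permutation, $T^* = \left(\begin{smallmatrix} S & b \\ 0 & 1\end{smallmatrix}\right)$ with $S$ substochastic of order $n-1$ and $\rho(S)<1$, so $\K(T^*) = \trace((I-S)^{-1})$. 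Lemma~\ref{trace} then gives $\K(T^*) \ge \sum_{j=1}^k \tfrac{1}{1-d_j} + (n-1-k) = \alpha - 1$, where $\alpha := \sum_{j=1}^k \tfrac{1}{1-d_j} + (n-k)$. Writing $\beta := \sum_{j=1}^k \tfrac{1}{(1-d_j)^2} + (n-k)$, the inequality $\alpha - 1 \ge \tfrac{\alpha^2 - \beta}{2\alpha}$ is equivalent to $(\alpha-1)^2 + (\beta-1) \ge 0$, which holds since $\beta \ge n \ge 2$.

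The main obstacle I anticipate is the absorbing edge case above: it is not a priori obvious that the crude trace-based bound dominates the more delicate cyclic value, so one has to set up the algebra so that $(\alpha-1)^2 + (\beta-1) \ge 0$ falls out cleanly. Everything else slots directly into the framework already developed in Sections~\ref{prelim}--\ref{sec:diag}.
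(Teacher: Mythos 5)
Your proof is correct and follows essentially the same route as the paper: the lower bound comes from applying Theorem~\ref{diagmain} to the actual diagonal of a minimizing completion and then using the monotonicity in Remark~\ref{incrj0} to push the free diagonal entries to $0$, while the upper bound is attained by the cyclic completion of Theorem~\ref{C}. Your separate treatment of the case in which a free diagonal entry of the minimizer equals $1$ (via Lemma~\ref{lem:trace} and the inequality $(\alpha-1)^2+\beta-1\ge 0$) is a small but genuine refinement, since the paper invokes Theorem~\ref{diagmain} without remarking that its hypothesis $d_j\in[0,1)$ could a priori fail at the unspecified diagonal positions.
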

\begin{proof}
Set $d_{k+1}, \ldots, d_n$	 all equal to $0$. From Theorem \ref{diagmain} and Remark \ref{incrj0} we find that for any stochastic matrix $T$ such that $t_{j,j}=d_j, j=1, \ldots, k,$ $\K(T)\ge  \frac{1}{2 (\sum_{j=1}^k \frac{1}{1-d_j} +n-k) }\left( \left(\sum_{j=1}^k \frac{1}{1-d_j} + n-k\right)^2 - \sum_{j=1}^k \frac{1}{(1-d_j)^2 }-(n-k) 
\right).$ Let $C$ denote a cyclic permutation matrix of order $n$. 
Setting $D=\diag(d_1, \ldots, d_n)$ and $T_0=D+(I-D)C,$ we find from Theorem \ref{C} that $\K(T_0) =$ $$ \frac{1}{2 (\sum_{j=1}^k \frac{1}{1-d_j} +n-k) }\left( \left(\sum_{j=1}^k \frac{1}{1-d_j} + n-k\right)^2 - \sum_{j=1}^k \frac{1}{(1-d_j)^2 }-(n-k) 
\right).$$ 
\end{proof}



Our next goal is to characterize the matrices yielding the minimum value $m(P)$ in Theorem \ref{diagmain}. We first establish some  technical results that will assist in that characterization. 
The following formula appears in the proof of Lemma 2.5 in \cite{kfast}. 

\begin{lemma}\label{gen_der} 
	Suppose that $T$ is a stochastic matrix with one essential class, and for each $x \in [-t_{i,p}, t_{i,q}]$, set $S_x=T+ xe_i(e_p-e_q)^\top.$ Then for any $x \in [-t_{i,p}, t_{i,q}]$ such that $1-x(e_p-e_q)^\top(I-T)^{\#}e_i \ne 0,$ $\K(S_x)=\K(T) + \frac{x}{1-x(e_p-e_q)^\top(I-T)^{\#}e_i} (e_p-e_q)^\top ((I-T)^{\#})^2 e_i.$
\end{lemma}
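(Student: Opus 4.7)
My plan is to realize the rank-one perturbation $xe_i(e_p-e_q)^\top$ as a rank-one update to a full-rank factorization of $I-T$, after which the identity $\K(T)=\trace((YX)^{-1})$ reduces the problem to a single Sherman--Morrison calculation. Recall from Section~\ref{sec:intro} that for any full-rank factorization $I-T=XY$ one has $\K(T)=\trace((YX)^{-1})$ and $(I-T)^\#=X(YX)^{-2}Y$; these are the only facts about $\K$ and $(I-T)^\#$ that the argument will need.

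First I would fix a full-rank factorization $I-T=XY$ with $X$ of size $n\times(n-1)$ and $Y$ of size $(n-1)\times n$. Because $(I-T)\ones=0$ and $X$ has full column rank, $Y\ones=0$; since $Y$ has full row rank, its row space equals the $(n-1)$-dimensional space of zero-sum row vectors of length $n$. As $(e_p-e_q)^\top$ is a zero-sum row vector, there exists $w\in\reals^{n-1}$ with $w^\top Y=(e_p-e_q)^\top$, and therefore
\begin{equation*}
I-S_x=XY-xe_iw^\top Y=(X-xe_iw^\top)Y.
\end{equation*}
For those $x$ at which $X-xe_iw^\top$ retains full column rank, this is itself a full-rank factorization of $I-S_x$, and that condition is equivalent to invertibility of $YX-x(Ye_i)w^\top$.

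Applying the Sherman--Morrison formula to $YX-x(Ye_i)w^\top$ and then tracing, with a single use of the cyclic property, yields
\begin{equation*}
\K(S_x)=\trace\!\bigl((YX-x(Ye_i)w^\top)^{-1}\bigr)=\K(T)+\frac{x\,w^\top(YX)^{-2}Ye_i}{1-x\,w^\top(YX)^{-1}Ye_i},
\end{equation*}
with the denominator being precisely the Sherman--Morrison admissibility condition. The last step is to translate the scalar factors in $w$ back into the group inverse. Using $(I-T)^\#=X(YX)^{-2}Y$ together with $w^\top Y=(e_p-e_q)^\top$ and the identities $YX\cdot(YX)^{-2}=(YX)^{-1}$ and $YX\cdot(YX)^{-3}=(YX)^{-2}$, one reads off $w^\top(YX)^{-1}Ye_i=(e_p-e_q)^\top(I-T)^\#e_i$ and $w^\top(YX)^{-2}Ye_i=(e_p-e_q)^\top((I-T)^\#)^2e_i$; substituting these gives the claimed formula, and the admissibility condition becomes exactly $1-x(e_p-e_q)^\top(I-T)^\#e_i\ne0$.

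I expect no substantive obstacle; the calculation is essentially the one carried out in the proof of Lemma~2.5 of \cite{kfast}, and the only mild care needed is the verification of the two scalar identities linking $w$ and $(I-T)^\#$, which is a one-line telescoping in each case. A minor point worth flagging is that the argument does not require $S_x$ itself to have a single essential class: the formula is valid as soon as $YX-x(Ye_i)w^\top$ is invertible, which is the same as $1$ being an algebraically simple eigenvalue of $S_x$, i.e.\ the setting in which $\K(S_x)$ was extended in Section~\ref{sec:intro}.
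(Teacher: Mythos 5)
Your argument is correct, and it is essentially the intended one: the paper omits a proof of this lemma precisely because the computation is the Sherman--Morrison calculation from Lemma~2.5 of \cite{kfast}, carried out on a full-rank factorization $I-T=XY$ exactly as you do (and as the paper itself does in Lemma~\ref{XY} and Lemma~\ref{cyc:der}). Your translation of the scalars $w^\top(YX)^{-1}Ye_i$ and $w^\top(YX)^{-2}Ye_i$ into $(e_p-e_q)^\top(I-T)^\#e_i$ and $(e_p-e_q)^\top((I-T)^\#)^2e_i$ via $(I-T)^\#=X(YX)^{-2}Y$ checks out, as does your observation that the formula only needs $1$ to remain an algebraically simple eigenvalue of $S_x$.
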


\begin{lemma}\label{cyc:der} 
	Suppose that $d_j \in [0,1), j=1, \ldots, n$, let $D=\diag(d_1, \ldots, d_n)$ and let $C$ be the permutation matrix corresponding to the cyclic permutation  $1 \rightarrow 2 \rightarrow \ldots \rightarrow n-1 \rightarrow n \rightarrow 1.$ Let $T=D+(I-D)C$. Fix an index $k=2, \ldots, n-1,$ and for each $x\in [0,1-d_n],$ let $T_x=T+xe_n(e_k-e_1)^\top.$ Then 
	$\K(T_x)>\K(T)$ for all $x\in (0,1-d_n].$ 
\end{lemma}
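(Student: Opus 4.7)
The plan is to compute $\K(T_x)$ in closed form via Lemma~\ref{XY} and then read off strict positivity of $\K(T_x) - \K(T)$ from a short combinatorial identity. Since the perturbation affects only row $n$ of $T$ and leaves its $(n,n)$-entry $d_n$ untouched, the partition used in the proof of Theorem~\ref{C} applies to $T_x$ with the same leading block $S = \hat D + (I-\hat D)N$; only the truncated last row changes to $u^\top = (1-d_n-x)e_1^\top + xe_k^\top$. Setting $a_j = 1/(1-d_j)$ and $v_i = \sum_{j=i}^{n-1}a_j$, and recalling that $(I-S)^{-1}=(I-N)^{-1}(I-\hat D)^{-1}$ is upper triangular with $(i,j)$-entry $a_j$ for $j\ge i$, a direct computation yields $1+u^\top(I-S)^{-1}\ones = 1+(1-d_n)H-xG_k$ and $\trace((I-S)^{-1}\ones u^\top(I-S)^{-1}) = (1-d_n)V-xV_k$, where $H=\sum_{j=1}^{n-1}a_j$, $G_k=\sum_{j=1}^{k-1}a_j$, $V=\sum_{i=1}^{n-1}a_iv_i$, and $V_k=\sum_{i=1}^{k-1}a_iv_i$. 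Lemma~\ref{XY} then gives
\[
\K(T_x) = H - \frac{(1-d_n)V-xV_k}{1+(1-d_n)H-xG_k}.
\]

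Next I would form $\K(T_x)-\K(T)$ over a common denominator. Writing $H_{tot} = H+a_n$ and using the identity $1+(1-d_n)H = (1-d_n)H_{tot}$ (equivalent to $(1-d_n)a_n = 1$), the $x$-free part of the numerator cancels, as it must since $\K(T_0)=\K(T)$, and the numerator collapses to $x(1-d_n)(V_kH_{tot}-VG_k)$. The denominator is strictly positive on $[0,1-d_n]$, since $1+(1-d_n)H-xG_k \ge 1+(1-d_n)(H-G_k) \ge 1$. Hence the sign of $\K(T_x)-\K(T)$ is that of $V_kH_{tot}-VG_k$. Because $H_{tot}>H$ and $V_k>0$, it suffices to prove $V_kH>VG_k$; expanding both products and cancelling the contributions with both summation indices in $\{1,\dots,k-1\}$ produces the clean form
\[
V_kH - VG_k = \sum_{i=1}^{k-1}\sum_{j=k}^{n-1}a_ia_j(v_i-v_j),
\]
which is a sum of strictly positive terms since $i<j$ implies $v_i-v_j = \sum_{\ell=i}^{j-1}a_\ell > 0$. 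This yields $\K(T_x)-\K(T)>0$ for every $x\in(0,1-d_n]$.

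The main obstacle will be the algebraic bookkeeping in the middle step: one has to verify that the constant-in-$x$ part of the numerator really does vanish and that the residual linear-in-$x$ part assembles into $x(1-d_n)(V_kH_{tot}-VG_k)$. The key manipulation is the fusion of $V_k$ (arising from the ``$1$'' in $1+(1-d_n)H$) with $(1-d_n)V_k H$ into $(1-d_n)V_k H_{tot}$ by way of $(1-d_n)a_n = 1$. Once this factorization is in view, the combinatorial positivity of $V_kH-VG_k$ is an immediate consequence of the monotonicity of $i \mapsto v_i$.
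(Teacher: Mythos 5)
Your proof is correct, and it takes a genuinely different route from the paper's. The paper invokes the group-inverse perturbation formula (Lemma \ref{gen_der}): it first verifies that $1-x(e_k-e_1)^\top(I-T)^{\#}e_n>0$ by computing $(YX)^{-1}Ye_n$ and $(e_k-e_1)^\top X(YX)^{-1}$ from the full rank factorization, and then determines the sign of $(e_k-e_1)^\top((I-T)^{\#})^2e_n$, which it reduces to the positivity of the expression \eqref{eq:der} --- in your notation exactly $H_{tot}V_k - G_kV$ --- established there by a lengthy term-by-term regrouping. You instead compute $\K(T_x)$ in closed form for every $x$ directly from Lemma \ref{XY}, which is legitimate since the perturbation only alters the truncated last row $u^\top$ while keeping the leading block $S=\hat D+(I-\hat D)N$ (of spectral radius $\max_j d_j<1$) and the $(n,n)$ entry fixed, and $T_x$ retains a single essential class for all $x\in[0,1-d_n]$. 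Your difference quotient then lands on the same pivotal quantity $V_kH_{tot}-VG_k$, but your positivity argument --- antisymmetrize over $i,j\in\{1,\dots,k-1\}$ so that only the manifestly positive terms $a_ia_j(v_i-v_j)$ with $i\le k-1<k\le j$ survive --- is considerably cleaner than the paper's expansion. What your approach buys: the denominator positivity $1+(1-d_n)H-xG_k\ge 1$ replaces the paper's separate verification that the perturbation formula's denominator does not vanish, and you get an explicit rational formula for $\K(T_x)$ valid on the whole interval rather than an incremental statement. What the paper's approach buys is uniformity: Lemma \ref{gen_der} is the workhorse reused in Proposition \ref{entries} and Theorem \ref{thm:diag_eq}, so the paper's proof fits a template that applies even when no closed form for $\K(T_x)$ is available. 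All intermediate computations in your argument (the entries $a_j$ of $(I-S)^{-1}=(I-N)^{-1}(I-\hat D)^{-1}$, the identity $1+(1-d_n)H=(1-d_n)H_{tot}$, the cancellation of the $x$-free part of the numerator) check out.
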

\begin{proof}
	Here we employ the notation of Theorem \ref{C}. Let $X=\left[ \begin{array}{c}
		 (I-\hat D)(I-N)\\ \hline\\ [-9pt]  -(1-d_n)e_1^\top
	\end{array}\right],$ $Y=\left[\begin{array}{c|c}I&-\ones\end{array}\right].$
	Then $I-T=XY$  so that $(I-T)^{\#} = X(YX)^{-2}Y. $

	We have $Ye_n = -\ones,$ and it now follows that 
	\begin{equation} \label{dery} 
			(YX)^{-1}Y e_n=\frac{-1}{1+(1-d_n)e_1^\top(I-N)^{-1}(I-\hat D)^{-1}\ones}(I-N)^{-1}(I-\hat D)^{-1} \ones.
	\end{equation}
Also, $(e_k-e_1)^\top X=((1-d_k)e_k-(1-d_1)e_1)^\top(I-N),$ and it follows that  
\begin{equation}\label{derx} 
	(e_k-e_1)^\top X(YX)^{-1} =(e_k-e_1)^\top.
\end{equation}
We find from \eqref{dery} and \eqref{derx}  that
\begin{eqnarray*}
&& 1-x(e_k-e_1)^\top(I-T)^{\#}e_n = 
   1-x(e_k-e_1)^\top X(YX)^{-1}(YX)^{-1}Y e_n = \\
  &&  1+ \frac{x}{1+(1-d_n)e_1^\top(I-N)^{-1}(I-\hat D)^{-1}\ones}(e_k-e_1)^\top(I-N)^{-1}(I-\hat D)^{-1} \ones. 
\end{eqnarray*}
Hence 
	$1-x(e_k-e_1)^\top(I-T)^{\#}e_n = 1+\frac{x}{1-d_n}\frac{\sum_{j=1}^{k-1} \frac{1}{1-d_j}}{\sum_{j=1}^{n} \frac{1}{1-d_j}}>0.$ In particular, we find from Lemma \ref{gen_der} that 
$
		\K(T_x) = \K(T)+ 
	\frac{x(1-d_n)\sum_{j=1}^{n} \frac{1}{1-d_j}}{ (1-d_n)\sum_{j=1}^{n} \frac{1}{1-d_j} +x \sum_{j=1}^{k-1} \frac{1}{1-d_j} } (e_k-e_1)^\top ((I-T)^{\#})^{2} e_n. 
$
	It remains to compute $(e_k-e_1)^\top ((I-T)^{\#})^{2} e_n.$ 
	
We have 
	 $((I-T)^{\#})^{2} = X(YX)^{-3}Y.$ From \eqref{dery} and \eqref{derx}, 
	we deduce that $(e_k-e_1)^\top X(YX)^{-3}Y e_n$ has the same sign as $(e_1-e_k)^\top (YX)^{-1} (I-N)^{-1}(I-\hat D)^{-1} \ones.$ 
	
	Observe that for each $j=1, \ldots, n-1, $ the $j$-th entry of $(I-N)^{-1}(I-\hat D)^{-1} \ones$ is $\sum_{\ell=j}^{n-1} \frac{1}{1-d_\ell}.$ 
	We also have $$(e_1-e_k)^\top (YX)^{-1} =   \sum_{j=1}^{k-1}\frac{1}{1-d_j}e_j^\top - \frac{ \sum_{\ell=1}^{k-1}\frac{1}{1-d_\ell}}{ \sum_{\ell=1}^{n}\frac{1}{1-d_\ell}}\sum_{j=1}^{n-1}\frac{1}{1-d_j}e_j^\top.$$  
	We thus find that $(e_k-e_1)^\top ((I-T)^{\#})^{2} e_n$ has the same sign as 	\begin{eqnarray}
		\sum_{\ell=1}^{n}\frac{1}{1-d_\ell}\left(\sum_{j=1}^{k-1}\frac{1}{1-d_j}  
		\sum_{\ell=j}^{n-1} \frac{1}{1-d_\ell}\right)
		- \sum_{\ell=1}^{k-1}\frac{1}{1-d_\ell} \left(\sum_{j=1}^{n-1}\frac{1}{1-d_j}  
		\sum_{\ell=j}^{n-1} \frac{1}{1-d_\ell}\right).
		\label{eq:der}
\end{eqnarray}
We claim that \eqref{eq:der} is strictly positive.

In order to establish the claim, we  rewrite \eqref{eq:der} as follows: 
	\begin{flalign*}
&	\sum_{\ell=1}^{n}\frac{1}{1-d_\ell}\left(\sum_{j=1}^{k-1}\frac{1}{1-d_j}  
		\sum_{\ell=k}^{n-1} \frac{1}{1-d_\ell} 
		+\sum_{j=1}^{k-1}\frac{1}{1-d_j}  
		\sum_{\ell=j}^{k-1} \frac{1}{1-d_\ell} 
		\right)\\
&		- \sum_{\ell=1}^{k-1}\frac{1}{1-d_\ell} \left( 
		\frac{1}{2} 	\left(\sum_{\ell=1}^{n-1}\frac{1}{1-d_\ell}\right)^2 + 
		\frac{1}{2}\sum_{\ell=1}^{n-1}\frac{1}{(1-d_\ell)^2}
		\right) \\
&		=	\sum_{\ell=1}^{n}\frac{1}{1-d_\ell}\sum_{j=1}^{k-1}\frac{1}{1-d_j}  
		\sum_{\ell=k}^{n-1} \frac{1}{1-d_\ell} 
		+ \sum_{\ell=1}^{n}\frac{1}{1-d_\ell} \left( 
		\frac{1}{2} 	\left(\sum_{\ell=1}^{k-1}\frac{1}{1-d_\ell}\right)^2 + 
		\frac{1}{2}\sum_{\ell=1}^{k-1}\frac{1}{(1-d_\ell)^2}
		\right) \\
&		- \sum_{\ell=1}^{k-1}\frac{1}{1-d_\ell} \left( 
		\frac{1}{2} 	\left(\sum_{\ell=1}^{n-1}\frac{1}{1-d_\ell}\right)^2 + 
		\frac{1}{2}\sum_{\ell=1}^{n-1}\frac{1}{(1-d_\ell)^2}
		\right) \end{flalign*}\begin{flalign*}
&		=	\sum_{\ell=1}^{n}\frac{1}{1-d_\ell}\sum_{j=1}^{k-1}\frac{1}{1-d_j}  
		\sum_{\ell=k}^{n-1} \frac{1}{1-d_\ell} 
	+ \frac{1}{2}  \sum_{\ell=1}^{n}\frac{1}{1-d_\ell} 
		\left(\sum_{\ell=1}^{k-1}\frac{1}{1-d_\ell}\right)^2 \\
&		+\frac{1}{2}  \sum_{\ell=1}^{n}\frac{1}{1-d_\ell} \sum_{\ell=1}^{k-1}\frac{1}{(1-d_\ell)^2}		-\frac{1}{2} \sum_{\ell=1}^{k-1}\frac{1}{1-d_\ell}\left(\sum_{\ell=1}^{k-1}\frac{1}{1-d_\ell}\right)^2 
		-\frac{1}{2} \sum_{\ell=1}^{k-1}\frac{1}{1-d_\ell}\left(\sum_{\ell=k}^{n-1}\frac{1}{1-d_\ell}\right)^2\\
&	-\sum_{\ell=1}^{k-1}\frac{1}{1-d_\ell}\sum_{\ell=1}^{k-1}\frac{1}{1-d_\ell}\sum_{\ell=k}^{n-1}\frac{1}{1-d_\ell} 
	-\frac{1}{2}\sum_{\ell=1}^{k-1}\frac{1}{1-d_\ell}\sum_{\ell=1}^{n-1}\frac{1}{(1-d_\ell)^2} \\
&		= \frac{1}{2} \sum_{j=k}^n \frac{1}{1-d_j} \left(\sum_{\ell=1}^{k-1}\frac{1}{1-d_\ell} \right)^2 + 
		\sum_{\ell=1}^{k-1}\frac{1}{1-d_\ell}\sum_{\ell=k}^{n-1}\frac{1}{1-d_\ell}\sum_{\ell=k}^{n}\frac{1}{1-d_\ell} \\
&		-\frac{1}{2}\sum_{\ell=1}^{k-1}\frac{1}{1-d_\ell} \left(\sum_{\ell=k}^{n-1}\frac{1}{1-d_\ell}\right)^2 
		+\frac{1}{2} \sum_{\ell=k}^{n}\frac{1}{1-d_\ell}\sum_{\ell=1}^{k-1}\frac{1}{(1-d_\ell)^2}   \\
&	-\frac{1}{2}\sum_{\ell=1}^{k-1}\frac{1}{1-d_\ell}\sum_{\ell=k}^{n-1}\frac{1}{(1-d_\ell)^2}.	 
	\end{flalign*}
	Inspecting this final expression, we see that \eqref{eq:der} is strictly positive. The conclusion now follows readily. 
\end{proof}

\begin{theorem}\label{thm:diag_eq} 
	Suppose that $d_j \in [0,1), j=1, \ldots, n$ and that $P$ is an $n \times n$ partial stochastic matrix with $p_{j,j}=d_j, j=1, \ldots, n$ and all other entries unspecified. Let $D=\diag(d_1, \ldots, d_n).$ Then $T \in SC(P)$ with $\K(T)=m(P)$ if and only if $T=D+(I-D)C,$  where $C$ is a cyclic permutation matrix for a cycle of length $n$.
\end{theorem}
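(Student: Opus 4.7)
The $(\Leftarrow)$ direction is immediate: if $T = D + (I-D)C$ with $C$ a cyclic permutation matrix of length $n$, then $T \in SC(P)$ and Theorem \ref{C} computes $\K(T)$ to be exactly the closed-form expression that Theorem \ref{diagmain} identifies as $m(P)$. The substance lies in the $(\Rightarrow)$ direction, for which my plan is to combine the consolidation idea from Proposition \ref{entries} with the strict-increase statement of Lemma \ref{cyc:der}.

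Let $T \in SC(P)$ satisfy $\K(T) = m(P)$, and suppose for contradiction that some row $j$ of $T$ has two positive off-diagonal entries at columns $k_1 \ne k_2$. Reading the proof of Proposition \ref{entries}, the minimality of $\K(T)$ forces $(e_{k_1}-e_{k_2})^\top ((I-T)^{\#})^2 e_j = 0$, and consequently the entire segment $T + x e_j (e_{k_1}-e_{k_2})^\top$ with $x \in [-t_{j,k_1}, t_{j,k_2}]$ consists of single-essential-class completions of $P$ with $\K = m(P)$. Setting $x$ to the appropriate endpoint zeros out one of the two entries, yielding a new minimizer with strictly fewer positive off-diagonal entries in row $j$. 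Iterating this consolidation (finitely many times, since each step strictly reduces a nonnegative integer) produces a minimizer $T^* \in SC(P)$ with at most one positive off-diagonal entry per row. Since $d_j < 1$ for all $j$, each row of $T^*$ has exactly one, so $T^* = D + (I-D)C^*$ for some fixed-point-free permutation matrix $C^*$; the single-essential-class hypothesis together with Lemma \ref{xj} (as in the final paragraph of the proof of Theorem \ref{diagmain}) then forces $C^*$ to correspond to a single $n$-cycle.

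I then zoom in on the last consolidation step $T_{\mathrm{prev}} \to T^*$. These matrices differ only in one row $j$: in $T^*$ the off-diagonal mass of row $j$ sits entirely at $k_1 := C^*(j)$, while in $T_{\mathrm{prev}}$ the off-diagonal mass of row $j$ is split between $k_1$ and some column $k_2 \notin \{j, k_1\}$, with $\K(T_{\mathrm{prev}}) = \K(T^*) = m(P)$. Relabel the states of $T^*$ so that $j$ plays the role of ``$n$'' and $k_1$ plays the role of ``$1$'' in the standard cycle $1 \to 2 \to \cdots \to n \to 1$; Lemma \ref{cyc:der} then says that adding any positive multiple of $e_j(e_{k_2}-e_{k_1})^\top$ to $T^*$ strictly increases $\K$. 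But $T_{\mathrm{prev}}$ is precisely such a perturbation of $T^*$, with positive coefficient equal to the $(j,k_2)$ entry of $T_{\mathrm{prev}}$. Hence $\K(T_{\mathrm{prev}}) > \K(T^*) = m(P)$, contradicting $\K(T_{\mathrm{prev}}) = m(P)$. This contradiction shows that every minimizer has exactly one positive off-diagonal entry in each row, and the cycle-length argument above closes the proof.

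The step I expect to be most delicate is the relabeling invocation of Lemma \ref{cyc:der}, since that lemma is stated for the specific cycle $1 \to 2 \to \cdots \to n \to 1$ and for perturbations at row $n$, whereas the cycle $C^*$ produced by my iteration and the chosen row $j$ are both arbitrary. The cleanest way I see to justify the step is to note that simultaneous conjugation of $T^*$ by a permutation matrix permutes the diagonal entries of $D$, can send $C^*$ to any chosen $n$-cycle, and preserves both the spectrum and $\K$; since Lemma \ref{cyc:der} is stated for arbitrary $d_j \in [0,1)$, it applies to the conjugated matrix and transports back to give the strict inequality for $T^*$. I would record this permutation-invariance remark explicitly before citing the lemma so that the contradiction is airtight.
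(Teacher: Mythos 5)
Your proof is correct and is essentially the paper's argument: the paper phrases it as an induction on the number of positive off-diagonal entries (if $T$ is not of the cyclic form, either a perturbation strictly decreases $\K$, or the count drops by one with $\K$ preserved, with Lemma \ref{xj} handling the count-$n$ non-cycle case and Lemma \ref{cyc:der} supplying the strict inequality when the reduced matrix is cyclic), whereas you run the same consolidation chain forward and extract the contradiction at its final step. The only cosmetic slip is calling $C^*$ a permutation matrix before ruling out repeated columns, but your immediate appeal to the unique-cycle argument from Theorem \ref{diagmain} covers that, and your permutation-similarity justification for relabeling in Lemma \ref{cyc:der} is exactly the ``without loss of generality'' step the paper takes.
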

\begin{proof}
	Suppose that $T\in SC(P)$ and that $T$ cannot be written as $D+(I-D)C,$  for any $n$-cyclic permutation matrix $C$. Let the number of positive off-diagonal entries in $T$ be $\ell$. We claim that $\K(T)> m(P),$ and 
	 proceed by induction on $\ell$. 
	
	If $\ell=n,$ then from the proof of Theorem \ref{diagmain}, we find that for some $k<n,$ $\K(T)=$ $ \frac{(\sum_{j=1}^k \frac{1}{1-d_j})^2- \sum_{j=1}^k \frac{1}{(1-d_j)^2}}{2\sum_{j=1}^k \frac{1}{1-d_j}} + \sum_{j=k+1}^n \frac{1}{1-d_j}.$ The conclusion then follows from Lemma \ref{xj}. 
	
	Suppose now that $\ell>n$ (here we mimic the approach of  Proposition  \ref{entries}). Then there are 	
	indices $j, k_1, k_2$ such that $t_{j,k_1}, t_{j,k_2}>0.$  If $(e_{k_1}-e_{k_2})^\top ((I-T)^{\#})^2e_j \ne 0$ then we may perturb $T$ slightly in the $j$-th row to produce a matrix $\hat T \in SC(P)$ such that $\K(\hat T)<\K(T).$ On the other hand, if $(e_{k_1}-e_{k_2})^\top ((I-T)^{\#})^2e_j = 0,$ then there is a matrix $\tilde T \in SC(P)$ such that $\K(\tilde T)=\K(T)$ and $\tilde T$ has $\ell-1$ positive off-diagonal entries. If $\tilde T$ is not of the form  $D+(I-D)C$  for any $n$-cyclic permutation matrix $C$ then $\K(\tilde T)> m(P)$ by the induction hypothesis. Suppose now that $\tilde T=D+(I-D)C $ for some $n$-cyclic permutation matrix $C$, and without loss of generality $C$ corresponds to the permutation $1 \rightarrow 2 \rightarrow \ldots \rightarrow n \rightarrow 1.$ In that case $T$ is permutationally similar to a matrix $T_x = D+(I-D)C +	xe_n(e_k-e_1)^\top$ for some $x>0.$ Then $\K(T)=\K(T_x)>m(P),$ the strict inequality following from Lemma \ref{cyc:der}.

\end{proof}



\section{$P$ with all specified entries in one row}\label{sec:row}



In this section we consider the case that the $n \times n$ partial stochastic matrix $P$ has $n$ specified entries, all lying in a common row. By applying a permutation similarity if necessary, we may assume 
without loss of generality that the specified entries are in row $n$, and take the values $r_0, r_1, \ldots, r_{n-1}$, with $r_0$ on the diagonal of $P$, with  the remaining values occupying positions $(n,1), \ldots, (n,n-1).$ Throughout this section we assume that $r_j\ge 0, j=0, \ldots, n-1$ and $\sum_{j=0}^{n-1}r_j=1.$ 

As in section \ref{prelim}, we consider a $T\in SC(P)$ that minimizes Kemeny's constant, and  in view of Proposition \ref{entries}, we focus on  the case that the first $n-1$ rows of $T$ are $(0,1)$. That leads to two possibilities for the directed graph of $T$: either there is a cycle in the subgraph induced by vertices $1, \ldots, n-1,$ or every cycle passes through vertex $n$. The next two results address those cases. 

\begin{proposition}\label{row-cycle} 
	Suppose that the first $n-1$ rows of the stochastic matrix  $T$ are $(0,1)$,  the directed graph of $T$ has a cycle on vertices $1, \ldots, k$, that $T$ has a single essential class, and that the last row of $T$ is specified, with diagonal entry $r_0.$ Then $\K(T) = \frac{2n-k-3}{2}+\frac{1}{1-r_0}.$ In particular, $\K(T) \ge \frac{n-2}{2}+\frac{1}{1-r_0},$ with equality if $k=n-1.$ 
\end{proposition}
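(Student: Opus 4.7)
The plan is to exploit the block lower-triangular form that the cycle imposes on $T$ and reduce the computation to the trace of a small substochastic block. After a permutation similarity (which preserves $\K$), I may assume the cycle reads $1 \to 2 \to \cdots \to k \to 1$, so that rows $1, \ldots, k$ of $T$ are the rows of the $k \times k$ cyclic permutation matrix $C_k$ padded by zeros. Each cycle vertex thus has its unique out-neighbour inside the cycle, so $\{1,\ldots,k\}$ is absorbing and $T$ takes the form
\[T = \begin{pmatrix} C_k & 0 \\ A & N \end{pmatrix},\]
where $N$ is the $(n-k)\times(n-k)$ substochastic matrix of transitions within $\{k+1,\ldots,n\}$. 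Because the cycle is the unique essential class, every non-cycle vertex reaches it with positive probability in finitely many steps, which forces $N^m\to 0$ and hence the spectral radius of $N$ to be strictly less than $1$.

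Next I would use the block triangular form to split the spectrum of $T$ as the disjoint union $\mathrm{spec}(C_k)\cup\mathrm{spec}(N)$ (with multiplicity). The eigenvalues of $C_k$ are the $k$-th roots of unity, and the classical identity $\sum_{j=1}^{k-1} 1/(1-e^{2\pi ij/k}) = (k-1)/2$, substituted into the eigenvalue formula for Kemeny's constant, gives
\[\K(T) = \frac{k-1}{2} + \trace\bigl((I-N)^{-1}\bigr).\]
The diagonal entries of $N$ are forced by the standing hypotheses: for any $i\in\{k+1,\ldots,n-1\}$, a self-loop at $i$ would make $\{i\}$ a second essential class, contradicting single essential class, so $N_{ii}=0$; at the last vertex $N_{nn}=r_0$ by specification. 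Applying Lemma \ref{lem:trace} to the substochastic matrix $N$ then yields
\[\trace\bigl((I-N)^{-1}\bigr) \ge (n-k-1) + \frac{1}{1-r_0},\]
and adding $(k-1)/2$ produces the value $(2n-k-3)/2+1/(1-r_0)$. The ``in particular'' bound follows since $k\le n-1$ implies $(2n-k-3)/2\ge (n-2)/2$, with equality when $k=n-1$, in which case $N$ is the scalar $[r_0]$ and Lemma \ref{lem:trace} is trivially tight.

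The step I expect to require the most care is upgrading the Lemma \ref{lem:trace} inequality to the equality asserted in the proposition. My plan is to leverage the rigid $(0,1)$ structure of the first $n-1$ rows of $T$: each of the first $n-k-1$ rows of $N$ is either zero (the corresponding vertex sends its single out-edge straight to the cycle) or a single basis vector, while the last row of $N$ is $(r_{k+1},\ldots,r_{n-1},r_0)$. Under the single essential class hypothesis, the induced subdigraph on $\{k+1,\ldots,n\}$ is tree-like with root at $n$ and leaves feeding into the cycle, and a cofactor expansion of $(I-N)^{-1}$ along the sparse rows should collapse the trace to exactly $(n-k-1)+1/(1-r_0)$. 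Handling the case in which a path from some $i\in\{k+1,\ldots,n-1\}$ passes through $n$ before reaching the cycle is where I expect the main bookkeeping to lie.
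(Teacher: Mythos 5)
Your block decomposition and the spectral splitting $\K(T)=\frac{k-1}{2}+\trace((I-N)^{-1})$ are exactly the paper's route; the only structural difference is how vertex $n$ is handled, and that is precisely where your argument is incomplete. The paper writes the complement of the cycle as a further block triangular pair: an $(n-k-1)\times(n-k-1)$ block on the intermediate vertices alone, sitting above the $1\times 1$ block $[r_0]$. Since each intermediate row is a $(0,1)$ stochastic row, any cycle among the intermediate vertices would constitute a second essential class, so that smaller block is nilpotent and contributes exactly $n-k-1$ to the trace, while the scalar block contributes exactly $\frac{1}{1-r_0}$; the asserted equality is then immediate, with no appeal to Lemma \ref{lem:trace} at all.

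Your version, which lumps vertex $n$ into an $(n-k)\times(n-k)$ block and invokes Lemma \ref{lem:trace}, only delivers the inequality $\K(T)\ge \frac{2n-k-3}{2}+\frac{1}{1-r_0}$, and the upgrade to equality is left as a plan. That plan will not go through in the generality you describe: in the very configuration you flag --- an intermediate vertex whose out-edge points to $n$ --- the trace is genuinely larger than $(n-k-1)+\frac{1}{1-r_0}$ whenever $n$ feeds back into that vertex's path with positive probability. For instance, with $n=4$, $k=2$, row $3$ equal to $e_4^\top$ and last row $(r_1,r_2,r_3,0)$ with $r_3>0$, the relevant block is $M=\bigl[\begin{smallmatrix}0&1\\ r_3&0\end{smallmatrix}\bigr]$ and $\trace((I-M)^{-1})=\frac{2}{1-r_3}>2$, so no cofactor expansion can collapse the trace to the claimed value. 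The equality in the proposition really requires the additional structural fact (which the paper's displayed block form builds in silently) that no intermediate vertex transitions to $n$. To finish, you should either impose that structure explicitly --- which is all that is needed for the application in Theorem \ref{row-main}, since the excluded configurations only increase Kemeny's constant --- or settle for the inequality, which is what the downstream minimization actually uses.
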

\begin{proof}
	Write $T$ as $T=\left[\begin{array}{c|c|c}C&0&0\\ \hline X &N&0\\ \hline\\ [-9pt] u^\top &v^\top &r_0\end{array}		
		\right],$  
		where $C$ is the adjacency matrix of the $k$-cycle,  $N$ is a $(0,1)$ matrix of order $n-k-1$, and $u^\top, v^\top $ are vectors whose entries comprise $r_1, \ldots, r_{n-1}$. The directed graph of $N$ cannot contain a cycle, otherwise the fact that $N$ is $(0,1)$ implies that $T$ has more than one essential class. Consequently $N$ is  nilpotent. 
		
		We now find that $\K(T)= \K(C) + \trace((I-N)^{-1})+ \frac{1}{1-r_0} = 
		\frac{k-1}{2}+n-k-1+\frac{1}{1-r_0}.$ The conclusion follows. 
\end{proof}

\begin{proposition}\label{row-nilp}
Suppose that the first $n-1$ rows of the stochastic matrix $T$ are $(0,1)$ and that every cycle in the directed graph of $T$  goes through  vertex $n$. 
Suppose that in the last row of $T$, the entries $r_1, \ldots, r_{n-1}$ appear in positions $1, \ldots, n-1$ (in some order) and that the $(n,n)$ entry is $r_0$.  Partition the vertices of the directed graph associated with $T$  into sets $A_0=\{n\}, A_1, \ldots, A_{d-1}$ where $A_j$ is the set of vertices at distance $j$ from $n$, and $d-1$ is the maximum distance from $n$. For each $j=0, \ldots, d-1,$ let $\tilde r_j=\sum_{k \in A_j} r_k.$ 
Then $\K(T)=$ $ n-1-\frac{\sum_{j=0}^{d-1}j(j+1)\tilde r_j}{2\sum_{j=0}^{d-1}(j+1)\tilde r_j}.$ 
\end{proposition}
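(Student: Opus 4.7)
The plan is to apply Lemma \ref{XY} directly. Let $S$ denote the leading $(n-1)\times(n-1)$ principal submatrix of $T$, and let $u^\top$ denote the vector of its first $n-1$ entries in the last row. Because every cycle of $T$ passes through vertex $n$, and each of the first $n-1$ rows of $T$ has exactly one nonzero entry (equal to $1$), the submatrix $S$ is a $(0,1)$-matrix whose directed graph is acyclic. Hence $S$ is nilpotent, its spectral radius is $0<1$, and Lemma \ref{XY} gives
$$\K(T)=\trace((I-S)^{-1})-\frac{\trace((I-S)^{-1}\ones u^\top(I-S)^{-1})}{1+u^\top(I-S)^{-1}\ones}.$$

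Next I would exploit the power series representations $(I-S)^{-1}=\sum_{m\ge 0}S^m$ and $(I-S)^{-2}=\sum_{m\ge 0}(m+1)S^m$, which are finite sums by nilpotency. Because each row of $S$ has at most a single $1$, the graph of $S$ is an in-forest pointing toward vertex $n$ (with $n$ deleted); more precisely, for any $i\in A_{a_i}$ the sequence $i,\mathrm{parent}(i),\ldots,\mathrm{parent}^{a_i-1}(i)$ lists precisely the vertices of $\{1,\ldots,n-1\}$ reachable from $i$ in $S$, and there is a unique walk of length $m$ from $i$ to $\mathrm{parent}^m(i)$ whenever $0\le m\le a_i-1$. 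Consequently each diagonal entry of $(I-S)^{-1}$ equals $1$, so $\trace((I-S)^{-1})=n-1$; and by counting these ancestors,
$$\bigl((I-S)^{-1}\ones\bigr)_i=a_i,\qquad \bigl((I-S)^{-2}\ones\bigr)_i=\sum_{m=0}^{a_i-1}(m+1)=\frac{a_i(a_i+1)}{2}.$$

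Using the cyclic property of trace, $\trace((I-S)^{-1}\ones u^\top(I-S)^{-1})=u^\top(I-S)^{-2}\ones$. Grouping the entries of $u$ according to the level sets $A_j$ and recalling $\tilde r_j=\sum_{k\in A_j}r_k$ (with $\sum_{j=0}^{d-1}\tilde r_j=1$), I then obtain
$$u^\top(I-S)^{-1}\ones=\sum_{j=0}^{d-1}j\,\tilde r_j,\qquad 1+u^\top(I-S)^{-1}\ones=\sum_{j=0}^{d-1}(j+1)\tilde r_j,$$
$$u^\top(I-S)^{-2}\ones=\frac{1}{2}\sum_{j=0}^{d-1}j(j+1)\tilde r_j.$$
Substituting these four ingredients into the formula from Lemma \ref{XY} yields the claimed expression for $\K(T)$.

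I do not anticipate a serious obstacle. The only subtlety is the bookkeeping around ``ancestors of $i$ in the graph of $S$'', which excludes the root $n$ and so gives $a_i$ rather than $a_i+1$ in the identity $((I-S)^{-1}\ones)_i=a_i$; this $+1$ is precisely what the extra summand $r_0$ in $\tilde r_0$ contributes to the denominator through $1+u^\top(I-S)^{-1}\ones$. Everything else reduces to standard geometric-series manipulation and the cyclic trace identity.
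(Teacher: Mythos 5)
Your proof is correct, but it takes a genuinely different route from the paper's. The paper proves this proposition by induction on $n$: it applies Lemma \ref{XY} only in the case where the first $n-1$ rows are distinct (so that $S$ is the superdiagonal shift matrix $N$ and the relevant quantities are easy to read off), and it reduces the general case to that one by merging two identical rows via the factorization $T=UV$ together with the identity $\K(UV)=1+\K(VU)$, invoking the induction hypothesis on the smaller matrix $VU$. You instead apply Lemma \ref{XY} once, in full generality, and compute $(I-S)^{-1}$ and $(I-S)^{-2}$ directly from the combinatorial structure: since each of the first $n-1$ rows has a single $1$ and every cycle passes through $n$, the graph of $S$ is an in-forest rooted toward $n$, each vertex has a unique walk of each admissible length, and the identities $\trace((I-S)^{-1})=n-1$, $((I-S)^{-1}\ones)_i=a_i$, $((I-S)^{-2}\ones)_i=a_i(a_i+1)/2$ follow; grouping by the level sets $A_j$ then gives the formula. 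Your computations all check out (including the cyclic-trace step $\trace((I-S)^{-1}\ones u^\top(I-S)^{-1})=u^\top(I-S)^{-2}\ones$ and the conversion $1+\sum_j j\tilde r_j=\sum_j(j+1)\tilde r_j$ using $\sum_j\tilde r_j=1$). The one hypothesis of Lemma \ref{XY} you do not explicitly verify is that $T$ has a single essential class; this does follow from your setup (every vertex reaches $n$ along its unique out-path, and every vertex reachable from $n$ lies in $n$'s class), but a sentence to that effect would make the application of the lemma airtight. What each approach buys: yours is shorter and avoids the two-case induction entirely; the paper's confines the explicit inverse computation to the simplest matrix $N$ at the cost of the row-merging reduction.
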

\begin{proof}
	We proceed by induction on $n$ and note that the case $n=2$ is straightforward. Henceforth we suppose that $n \ge 3.$ We subdivide the rest of the proof into two cases. \\
	
	\noindent \textit{Case 1}, the first $n-1$ rows of $T$ are distinct. \\  
	In this case, we have $d=n,$ and each $A_j$ consists of a single vertex. Without loss of generality we may 
		write $T$ as 
		$\left[ \begin{array}{c|c} N&e_{n-1}\\ \hline \\ [-9pt] u^\top & \tilde r_0
		\end{array}\right], $  
		where $N$ is the nilpotent matrix of order $n-1$ with ones on the superdiagonal and zeros elsewhere, and where the entries in $u^\top $ correspond to  $r_1, \ldots, r_{n-1}.$ Indeed, in the notation of the statement, $u_j = \tilde r_{n-j}, j=1, \ldots, n-1.$ 
		
	Applying Lemma \ref{XY} we find that 
	\begin{eqnarray*}
	\K(T)&=&  \trace((I-N)^{-1}) - \frac{1}{1+u^\top (I-N)^{-1} \ones }\trace( (I-N)^{-1}\ones u^\top (I-N)^{-1}   )  \\
&=&	n-1 - \frac{1}{1+u^\top (I-N)^{-1} \ones }  u^\top (I-N)^{-1} (I-N)^{-1}\ones. 
	\end{eqnarray*}
	The desired expression now follows from a computation. \\
		
		\noindent \textit{Case 2}, among the first $n-1$ rows of $T$, two rows 
		are the same. \\
		For concreteness, but without loss of generality, we may assume that the first two rows of $T$ are equal to $e_3^\top.$ We may then write 
		$T$ as $T= \left[ \begin{array}{c|c}0^\top&e_1^\top\\ 0^\top &e_1^\top \\ \hline \\ [-9pt]  T_1 &  T_2\end{array} \right],$ 
		where: $ T_1$ is $(n-2)\times 2$,  $T_2$ is $(n-2)\times (n-2), $ the first $n-3$ rows of $\left[ \begin{array}{c|c}T_1&T_2\end{array}\right]$ are $(0,1),$ and the last row of  $\left[ \begin{array}{c|c}T_1&T_2\end{array}\right]$ consists of the entries $r_1, \ldots, r_{n-1}$ in the first $n-1$ positions and $r_0$ in the last position.

		We may factor $T$ as $UV$ where $U=
		\left[\begin{array}{c|c} \ones & \ones 0^\top  \\ \hline \\ [-9pt] 0&I
		 \end{array}\right], V=
	 \left[ \begin{array}{c|c}0^\top&e_1^\top\\ \hline \\ [-9pt]  T_1 &  T_2 \end{array} \right].$ Here the rows of $U$ are partitioned as $2$ rows/$(n-2)$ rows, and the columns are partitioned as $1$ column/$(n-2)$ columns, while the rows of $V$ are partitioned as $1$ row/$(n-2)$ rows, and the columns are partitioned as $2$ columns/$(n-2)$ columns. 
	 The eigenvalues of $T$ consist of $0,$ along with the eigenvalues of the $(n-1)\times (n-1)$ matrix $VU=\left[ \begin{array}{c|c} 0&e_1^\top \\ \hline \\ [-9pt]  T_1\ones & T_2
	 \end{array} \right].$ 
 Observe that since the first $n-3$ rows of  $ T_1 $ are $(0,1)$ with at most one $1$ in each row, $ T_1\ones $ is necessarily $(0,1)$ in its first $n-3$ positions.  
		Observe also that $\K(T)=\K(UV)=1+\K(VU).$ 
		
		Note that the induction hypothesis applies to $VU$, and in particular that in the directed graph of $T,$ vertices $1$ and $2$ of $T$ (which correspond to vertex $1$ of $VU$) are at the same distance from $n$. Applying the induction hypothesis to $VU$, we find that $\K(VU) =  
		n-2-\frac{\sum_{j=0}^{d-1}j(j+1)\tilde r_j}{2\sum_{j=0}^{d-1}(j+1)\tilde r_j}.$ The expression for $\K(T)$ now follows. 
\end{proof}

From Proposition \ref{row-nilp}, in order to minimize Kemeny's constant when one row of $P$ is specified, we would like to maximize $\frac{\sum_{j=0}^{d-1}j(j+1)\tilde r_j}{\sum_{j=0}^{d-1}(j+1)\tilde r_j}$
over the various choices of $d$ and the subsets $A_1, \ldots, A_{d-1}.$ The next result is useful in that regard. 

\begin{lemma}\label{per}
	Suppose that $d<n$ and that we have the partition $A_0, A_1, \ldots, A_{d-1},$ where $|A_{j_0}|\ge 2,$ say with $\ell \in A_{j_0}.$ 
	Consider the partition $ A_j', j=0, \ldots , d$ with $ A_j'=A_j, j \in \{0, \ldots, d-1\} \setminus\{j_0\},  A_{j_0}'= A_{j_0}\setminus \{\ell\},  A_d'=\{\ell\}.$ Let $g=\frac{\sum_{j=0}^{d-1}j(j+1)\tilde r_j}{\sum_{j=0}^{d-1}(j+1)\tilde r_j}$ be associated with the original partition $A_0, A_1, \ldots, A_{d-1},$ and let $h = \frac{\sum_{j=0}^{d}j(j+1)\tilde r_j'}{\sum_{j=0}^{d}(j+1)\tilde r_j'}$ be associated with the modified partition $ A_0',  \ldots,  A_{d}'.$ 
	Then $h \ge g.$  
\end{lemma}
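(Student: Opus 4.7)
The plan is to compute $h-g$ directly and reduce the inequality $h \ge g$ to a simple term-by-term comparison. Set $A = \sum_{j=0}^{d-1} j(j+1)\tilde r_j$ and $B = \sum_{j=0}^{d-1} (j+1)\tilde r_j$, so $g = A/B$. The only change in passing from the original partition to the modified one is that the element $\ell$ (with weight $r := r_\ell$) is moved out of $A_{j_0}$ and placed in the new class $A_d'$. Thus
\[
h \;=\; \frac{A + r\bigl[d(d+1) - j_0(j_0+1)\bigr]}{B + r(d - j_0)}.
\]
If $r = 0$ then $h = g$ and we are done, so assume $r > 0$. Note also that since $\ell \in A_{j_0}$ in the original partition, $j_0 \in \{0, \ldots, d-1\}$ and hence $d - j_0 > 0$.

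Clearing denominators (both are positive), $h \ge g$ is equivalent to
\[
B \bigl[d(d+1) - j_0(j_0+1)\bigr] \;\ge\; A(d - j_0).
\]
Factor $d(d+1) - j_0(j_0+1) = (d-j_0)(d+j_0+1)$, and divide by the positive quantity $d - j_0$, which reduces the inequality to
\[
(d + j_0 + 1)\,B \;\ge\; A, \qquad \text{i.e.,} \qquad \sum_{j=0}^{d-1} (d + j_0 + 1)(j+1)\tilde r_j \;\ge\; \sum_{j=0}^{d-1} j(j+1)\tilde r_j.
\]

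This last inequality is immediate term by term: for each $j \in \{0, \ldots, d-1\}$ we have $j \le d-1 \le d + j_0 + 1$, and $\tilde r_j \ge 0$, so $j(j+1)\tilde r_j \le (d+j_0+1)(j+1)\tilde r_j$. Summing yields the required estimate, and hence $h \ge g$. No step poses a real obstacle; the only mild care is in verifying $d - j_0 > 0$ so that dividing by it is legitimate, which follows from the hypothesis that $\ell$ was drawn from $A_{j_0}$ with $j_0 \le d-1$.
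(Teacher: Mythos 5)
Your proof is correct and follows essentially the same route as the paper: both express $h$ as $\bigl(A + r_\ell(d-j_0)(d+j_0+1)\bigr)/\bigl(B + r_\ell(d-j_0)\bigr)$ and reduce $h-g\ge 0$ to the term-by-term inequality $\sum_{j=0}^{d-1}(j+1)\tilde r_j(d+j_0+1-j)\ge 0$. The only cosmetic difference is that you clear denominators and split off the $r_\ell=0$ case, while the paper argues via the sign of $h-g$ directly.
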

\begin{proof}
	Observe that $\tilde r_j' =\tilde r_j$ for $j=0, \ldots, d-1, j\ne j_0,$ while $\tilde r_{j_0}'=\tilde{r}_{j_0} -r_\ell, \tilde r_d=r_\ell.$ It now follows that $h$ can be written as 
	$h = \frac{\sum_{j=0}^{d-1}j(j+1)\tilde r_j + (d-j_0)(d+j_0+1)r_\ell}{\sum_{j=0}^{d-1}(j+1)\tilde r_j + (d-j_0)r_\ell}.$ A computation now reveals that $h-g$ has the same sign as $r_\ell(d-j_0)((d+j_0+1)\sum_{j=0}^{d-1}(j+1)\tilde r_j- \sum_{j=0}^{d-1}j(j+1)\tilde r_j  ) = r_\ell(d-j_0) \sum_{j=0}^{d-1}(j+1)\tilde r_j(d+j_0+1-j),$ which is clearly nonnegative. 
\end{proof}

\begin{remark}{\rm{From Lemma \ref{per}, we see that in order to maximize $\frac{\sum_{j=0}^{d-1}j(j+1)\tilde r_j}{\sum_{j=0}^{d-1}(j+1)\tilde r_j},$ it suffices to consider the case that $d=n,$ and that each $A_j$ consists of a single index. It follows that the corresponding stochastic matrices are of the form  
$$\begin{bmatrix}
	0&1&0& \ldots &0&0\\
	0&0&1&0& \ldots &0\\
	\vdots & & & \ddots & & \vdots\\
		0&0& \ldots &0&1&0\\
	0&0& \ldots &0&0&1\\
	r_{\sigma(n-1)} & r_{\sigma{(n-2)}}& \ldots &r_{\sigma(2)}&r_{\sigma(1)}&r_0
\end{bmatrix}	$$
for  permutations $\sigma \in S_{n-1}.$ 
} }
\end{remark}

Here is one of the main results in this section. 

\begin{theorem} \label{row-main} Suppose that $P$ is an $n \times n$ partial  stochastic matrix where row $n$ has been specified as $p_{n,j}=r_{n-j}, j=1, \ldots, n,$ and all remaining entries are unspecified. Then 
	$$m(P) = \min \left\{ \frac{n-2}{2}+\frac{1}{1-r_0},  n-1-\frac{\sum_{j=1}^{n-1}j(j+1) r_{\sigma(j)}}{2(1+\sum_{j=1}^{n-1}j r_{\sigma(j)})} \right\},$$ where the minimum is taken over all permutations $\sigma $ of $\{1, \ldots, n-1\}.$ 
\end{theorem}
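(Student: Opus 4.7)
The plan is to combine the sparsity result from Proposition~\ref{entries} with the two structural cases already handled by Propositions~\ref{row-cycle} and \ref{row-nilp}, then apply Lemma~\ref{per} to identify the best configuration within the second case.

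First I would invoke Lemma~\ref{min} to pick $T \in SC(P)$ with $\K(T) = m(P)$ and a single essential class, and then invoke Proposition~\ref{entries} to assume without loss of generality that each of the first $n-1$ rows of $T$ has exactly one positive entry. Since those rows are fully unspecified and $T$ is stochastic, rows $1, \ldots, n-1$ of $T$ are each equal to some standard basis vector $e_k^\top$; that is, they are $(0,1)$-rows. The last row of $T$ is fixed by the specification of $P$, with diagonal entry $r_0$ and off-diagonal entries a permutation of $r_1, \ldots, r_{n-1}$.

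Now I would branch on the structure of the directed graph of $T$. Either (i) some directed cycle of $T$ lies entirely within $\{1, \ldots, n-1\}$, or (ii) every directed cycle of $T$ passes through vertex $n$. In case (i), Proposition~\ref{row-cycle} applies directly and yields $\K(T) \ge \frac{n-2}{2} + \frac{1}{1-r_0}$, with equality attainable by taking the induced $(n-1)$-cycle on vertices $1, \ldots, n-1$. In case (ii), Proposition~\ref{row-nilp} gives the exact formula $\K(T) = n-1 - \frac{\sum_{j=0}^{d-1} j(j+1)\tilde r_j}{2\sum_{j=0}^{d-1}(j+1)\tilde r_j}$, where $A_0=\{n\}, A_1, \ldots, A_{d-1}$ is the distance partition and $\tilde r_j = \sum_{k \in A_j} r_k$. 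Minimizing $\K(T)$ over such configurations is equivalent to maximizing the ratio $\frac{\sum j(j+1)\tilde r_j}{\sum (j+1)\tilde r_j}$, and by repeated application of Lemma~\ref{per} the maximum is achieved when $d = n$ and each $A_j$ is a singleton. In that case, writing $\tilde r_j = r_{\sigma(j)}$ for a permutation $\sigma$ of $\{1, \ldots, n-1\}$, and using $\sum_{j=0}^{n-1} \tilde r_j = 1$ to rewrite $\sum_{j=0}^{n-1}(j+1)\tilde r_j = 1 + \sum_{j=1}^{n-1} j r_{\sigma(j)}$, the formula reduces to the expression $n-1 - \frac{\sum_{j=1}^{n-1} j(j+1)r_{\sigma(j)}}{2(1 + \sum_{j=1}^{n-1} j r_{\sigma(j)})}$ appearing in the theorem.

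Taking the minimum over both cases then yields $m(P)$ as claimed, and each candidate on the right-hand side is realized by an explicit completion (the cycle on $\{1, \ldots, n-1\}$ in case (i), and the cyclic-type matrix from the remark following Lemma~\ref{per} in case (ii)), so the minimum is actually attained. The main obstacle I anticipate is purely bookkeeping: verifying that the index conventions of the distance partition $A_0, \ldots, A_{n-1}$, the permutation $\sigma$ appearing in the theorem statement, and the ordering $r_{n-j}$ of specified entries in row $n$ of $P$ all line up correctly so that the reduced formula from Proposition~\ref{row-nilp} matches the stated expression. Everything else is a direct appeal to results already established in this and the previous section.
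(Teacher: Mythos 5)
Your proposal is correct and follows essentially the same route as the paper: reduce via Proposition~\ref{entries} to completions whose first $n-1$ rows are $(0,1)$, split into the two cases handled by Propositions~\ref{row-cycle} and~\ref{row-nilp}, use Lemma~\ref{per} to reduce the second case to singleton distance classes (i.e.\ permutations of $r_1,\ldots,r_{n-1}$), and take the minimum of the two attained candidates. The only difference is that you spell out explicitly the preliminary reduction and the attainability of each candidate, which the paper relegates to the discussion preceding Proposition~\ref{row-cycle}.
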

\begin{proof} 
	Adopting the notation of Proposition \ref{row-nilp}, it follows 
	from Lemma \ref{per}  that in order to maximize $\frac{\sum_{j=0}^{d-1}j(j+1)\tilde r_j}{\sum_{j=0}^{d-1}(j+1)\tilde r_j}$ it suffices to consider $d=n, A_0=\{n\},$ and $A_j=\{\sigma(j)\}$ for some   permutation $\sigma $ of $\{1, \ldots, n-1\}.$  From the fact that $(\tilde r_0=)r_0=1-\sum_{k=1}^{n-1} r_{\sigma(k)},$ it follows that $\frac{\sum_{j=0}^{n-1}j(j+1)\tilde r_j}{\sum_{j=0}^{n-1}(j+1)\tilde r_j} = \frac{\sum_{j=1}^{n-1}j(j+1) r_{\sigma(j)}}{(1+\sum_{j=1}^{n-1}j r_{\sigma(j)})}.$ 
	The conclusion now follows from Propositions \ref{row-cycle} and \ref{row-nilp}. 
\end{proof}

\begin{remark}\rm{In the case that $r_0 \ge \frac{n-2}{n},$ we have $\frac{n-2}{2}+\frac{1}{1-r_0} \ge n-1.$ In that case, the expression for $m(P)$ simplifies to $\min_{\sigma \in S_{n-1}} \left\{ n-1-\frac{\sum_{j=1}^{n-1}j(j+1) r_{\sigma(j)}}{2(1+\sum_{j=1}^{n-1}j r_{\sigma(j)})} \right\}.$ 
	}	
\end{remark}

On its face, Theorem \ref{row-main} leaves us in the position of evaluating  $(n-1)! + 1$ expressions, then choosing the minimum. Our next task is to reduce the number of permutations to be considered in Theorem \ref{row-main}. The following result is useful in that regard.

\begin{proposition}\label{swap}
	Suppose that $1\le j_1 < j_2 \le n-1$ 
	Form the sequence $\hat r_1, \hat r_2, \ldots, \hat r_{n-1}$ from $r_1, r_2, \ldots, r_{n-1} $ by exchanging  $r_{j_1}$ and $r_{j_2}$. Then $\frac{\sum_{j=0}^{n-1}j(j+1)\hat r_j}{\sum_{j=0}^{n-1}(j+1)\hat r_j} - \frac{\sum_{j=0}^{n-1}j(j+1) r_j}{\sum_{j=0}^{n-1}(j+1) r_j}.$ is positive, zero, or negative according as $$(r_{j_1}-r_{j_2})\left(j_1+j_2+1- \frac{\sum_{j=0}^{n-1}j(j+1)r_j }{\sum_{j=0}^{n-1}(j+1)r_j } \right)$$ is positive, zero, or negative.  
\end{proposition}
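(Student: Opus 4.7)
\textbf{Proof plan for Proposition \ref{swap}.} The argument is a direct algebraic computation. Set $A = \sum_{j=0}^{n-1} j(j+1) r_j$ and $B = \sum_{j=0}^{n-1} (j+1) r_j$, and let $\hat A, \hat B$ denote the analogous quantities formed with $\hat r_0, \ldots, \hat r_{n-1}$. Since the exchange affects only indices $j_1, j_2 \ge 1$, we have $\hat r_0 = r_0$, and the only differences occur in the $j_1$- and $j_2$-summands. My plan is to compute $\hat A - A$ and $\hat B - B$ in closed form, then express the desired sign as the sign of $\hat A B - A \hat B$.

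First I would write $\hat A - A = (r_{j_2} - r_{j_1}) j_1(j_1+1) + (r_{j_1} - r_{j_2}) j_2(j_2+1) = (r_{j_1} - r_{j_2})\bigl(j_2(j_2+1) - j_1(j_1+1)\bigr)$, and use the factorisation $j_2(j_2+1) - j_1(j_1+1) = (j_2 - j_1)(j_1 + j_2 + 1)$ to obtain
$$\hat A - A = (r_{j_1} - r_{j_2})(j_2 - j_1)(j_1 + j_2 + 1).$$
The same calculation with the factor $(j+1)$ instead of $j(j+1)$ yields
$$\hat B - B = (r_{j_1} - r_{j_2})(j_2 - j_1).$$

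Next, since $B, \hat B > 0$ (the denominators are strictly positive in any case where the problem is non-degenerate, as $r_0 < 1$ forces some $r_j > 0$ for $j \ge 1$), the sign of $\hat A/\hat B - A/B$ is the sign of $\hat A B - A \hat B$. Writing $\hat A = A + (\hat A - A)$ and $\hat B = B + (\hat B - B)$, the cross-term cancellation gives
$$\hat A B - A \hat B = (\hat A - A)B - A(\hat B - B) = (r_{j_1} - r_{j_2})(j_2 - j_1)\bigl((j_1 + j_2 + 1)B - A\bigr).$$
Dividing out the strictly positive factor $B(j_2 - j_1)$ (using $j_1 < j_2$), the sign of $\hat A B - A \hat B$ coincides with the sign of $(r_{j_1} - r_{j_2})\bigl(j_1 + j_2 + 1 - A/B\bigr)$, which is exactly the claimed criterion.

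There is no real obstacle here; the proof is entirely routine once one recognises the key identity $j_2(j_2+1) - j_1(j_1+1) = (j_2 - j_1)(j_1 + j_2 + 1)$ that produces the factor $j_1 + j_2 + 1$ appearing in the statement. The only mild care needed is to verify that the denominators are positive so that the sign of the ratio difference agrees with the sign of $\hat A B - A \hat B$, which follows from the standing hypotheses.
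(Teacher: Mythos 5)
Your proposal is correct and follows essentially the same route as the paper: both compute the effect of the swap on numerator and denominator via the factorisation $j_2(j_2+1)-j_1(j_1+1)=(j_2-j_1)(j_1+j_2+1)$ and reduce the sign of the difference of ratios to the sign of $(r_{j_1}-r_{j_2})(j_2-j_1)\bigl((j_1+j_2+1)B-A\bigr)$, which the paper merely writes as the single sum $\sum_{j=0}^{n-1}(j+1)r_j(j_1+j_2+1-j)$. One trivial remark: positivity of the denominators needs no non-degeneracy hypothesis, since $\sum_{j=0}^{n-1}(j+1)r_j\ge\sum_{j=0}^{n-1}r_j=1$.
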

\begin{proof}
	Observe that $$\frac{\sum_{j=0}^{n-1}j(j+1)\hat r_j}{\sum_{j=0}^{n-1}(j+1)\hat r_j}  = \frac{\sum_{j=0}^{n-1}j(j+1) r_j    +(r_{j_1}-r_{j_2})(j_2-j_1)(j_1+j_2+1)    }{\sum_{j=0}^{n-1}(j+1) r_j +(r_{j_1}-r_{j_2})(j_2-j_1)}.$$ A computation reveals that  
	\begin{flalign*}
&	\frac{\sum_{j=0}^{n-1}j(j+1)\hat r_j}{\sum_{j=0}^{n-1}(j+1)\hat r_j} - \frac{\sum_{j=0}^{n-1}j(j+1) r_j}{\sum_{j=0}^{n-1}(j+1) r_j} =\\ 
&	\frac{(r_{j_1}-r_{j_2})(j_2-j_1) \sum_{j=0}^{n-1}(j+1)r_j(j_1+j_2+1-j)}{\sum_{j=0}^{n-1}(j+1)\hat r_j \sum_{j=0}^{n-1}(j+1) r_j}.
	\end{flalign*}	
	
	The conclusion follows. 
\end{proof}

We always have $\frac{\sum_{j=1}^{n-1}j(j+1) r_j}{1+\sum_{j=1}^{n-1}j r_j} \le n-1,$ with equality  if and only if $r_{n-1}=1, r_j=0, j=0, \ldots, n-1.$ That case is excluded from consideration in the following result.

\begin{proposition}\label{prop:order} 
	Suppose that $r_0, r_1, \ldots, r_{n-1} \ge 0, \sum_{j=0}^n r_j=1$ and that $\gamma \equiv  \frac{\sum_{j=1}^{n-1}j(j+1) r_j}{1+\sum_{j=1}^{n-1}j r_j}$ maximizes $\frac{\sum_{j=1}^{n-1}j(j+1) r_{\sigma(j)}}{1+\sum_{j=1}^{n-1}j r_{\sigma(j)}}$ over all permutations $\sigma$ of $\{1, \ldots, n-1\}.$ Let $\rho_j, j=1, \ldots, n-1$ denote the sequence $r_1, \ldots, r_{n-1}$ written in nondecreasing order. Assume that $\gamma < n-1.$  \\
	a) If $\gamma < 4,$ then $r_j=\rho_j, j=1, \ldots, n-1.$\\
	b) If $\gamma \notin \naturals, \gamma >4,$ let $k=\lfloor \gamma \rfloor.$ Then \begin{eqnarray}\label{canon} \nonumber 
		&r_j& =\rho_{k-2j}, j=1, \ldots, \left \lfloor \frac{k-1}{2}\right \rfloor; 
		r_j=\rho_{2j-k+1}, j=\left \lceil \frac{k}{2} \right \rceil, \ldots, k-2;\\ 
		&r_j&=\rho_j, j=k-1, \ldots, n-1.
	\end{eqnarray}\\
	c) If $\gamma = k \in \naturals$ for some $k=4, \ldots, n-2,$ set $\ell = \lfloor \frac{k}{2} \rfloor.$ The ordering given by \eqref{canon}  maximizes $\frac{\sum_{j=1}^{n-1}j(j+1) r_{\sigma(j)}}{1+\sum_{j=1}^{n-1}j r_{\sigma(j)}}$.  
	The other maximizing orderings can be obtained from  \eqref{canon} by choosing a subset $\{ j_1, \ldots, j_m \} \subseteq \{1, \ldots, \ell-1\}$ and exchanging  $r_{j_i}$ and $r_{k-1-j_i}, i=1, \ldots, m$.  
\end{proposition}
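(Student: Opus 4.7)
The plan is to extract local swap conditions at a maximizer from Proposition~\ref{swap} and then assemble them into a global rank. Let $r = (r_1, \ldots, r_{n-1})$ achieve the maximum value $\gamma$. Proposition~\ref{swap} tells us that exchanging $r_{j_1}$ and $r_{j_2}$ (for $j_1 < j_2$) changes the objective with the same sign as $(r_{j_1} - r_{j_2})(j_1 + j_2 + 1 - \gamma)$; since $r$ is a maximizer this sign must be nonpositive. Hence $r_{j_1} \le r_{j_2}$ whenever $j_1 + j_2 + 1 > \gamma$, $r_{j_1} \ge r_{j_2}$ whenever $j_1 + j_2 + 1 < \gamma$, and the swap preserves the objective when $j_1 + j_2 + 1 = \gamma$. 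Part (a) is then immediate: if $\gamma < 4$, every pair $j_1 < j_2$ satisfies $j_1 + j_2 + 1 \ge 4 > \gamma$, so $r$ is nondecreasing and $r_j = \rho_j$ for each $j$.

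For part (b), set $k = \lfloor \gamma \rfloor$; the strict inequality $k < \gamma < k+1$ converts the conditions to ``$j_1 + j_2 \ge k \Rightarrow r_{j_1} \le r_{j_2}$'' and ``$j_1 + j_2 \le k-1 \Rightarrow r_{j_1} \ge r_{j_2}$''. First, for $j \ge k-1$ and any $j_1 < j$ one has $j_1 + j \ge k$, forcing $r_{j_1} \le r_j$; combined with the ascending condition applied inside $\{k-1, \ldots, n-1\}$, this shows the top $n-k+1$ values occupy positions $k-1, \ldots, n-1$ in nondecreasing order, so $r_j = \rho_j$ for $j \ge k-1$. Next, for each position $j \in \{1, \ldots, k-2\}$ (which receives one of $\rho_1, \ldots, \rho_{k-2}$), I count the indices $j' \ne j$ in $\{1, \ldots, k-2\}$ with $r_{j'} \ge r_j$: those with $j' > j$ are exactly the members of $\{k-j, \ldots, k-2\}$ (ascending regime), and those with $j' < j$ are exactly the members of $\{1, \ldots, \min(j-1, k-1-j)\}$ (descending regime). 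A short case split according to whether $2j \le k-1$ or $2j \ge k$ yields counts of $2j-2$ and $2k - 2j - 3$ respectively, hence ranks placing $r_j = \rho_{k-2j}$ or $r_j = \rho_{2j-k+1}$, exactly as in \eqref{canon}.

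Part (c) replays the argument with $\gamma = k \in \naturals$: now the descending condition tightens to $j_1 + j_2 \le k-2$, and the pairs $(j_1, k-1-j_1)$ with $j_1 \in \{1, \ldots, \ell-1\}$ fall into the unconstrained class $j_1 + j_2 + 1 = \gamma$, where swapping leaves the objective unchanged. Redoing the rank count while omitting these free pairs from the binding constraints again forces the canonical ordering \eqref{canon}, while each subset of free pairs may then be independently swapped to produce another maximizer of the same value, giving precisely the stated $2^{\ell-1}$ maximizers. The main technical obstacle lies in the counting step of part (b): one must verify that the case split $2j \le k-1$ versus $2j \ge k$ partitions $\{1, \ldots, k-2\}$ cleanly for both parities of $k$, that the descending-regime count $\min(j-1, k-1-j)$ collapses to the desired value in each case, and that the two resulting rank formulas align with the index bounds $\lfloor (k-1)/2 \rfloor$ and $\lceil k/2 \rceil$ in \eqref{canon}.
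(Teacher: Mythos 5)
Your proposal is correct and follows essentially the same route as the paper: both rest entirely on the pairwise swap criterion of Proposition~\ref{swap} applied at a maximizer, namely that $(r_{j_1}-r_{j_2})(j_1+j_2+1-\gamma)\le 0$ for all $j_1<j_2$, and then assemble these pairwise constraints into the global ordering, treating the pairs with $j_1+j_2+1=\gamma$ as the value-preserving free swaps in part (c). The only difference is bookkeeping --- you determine each position's rank by counting forced comparisons, whereas the paper writes out the interleaving chains $r_{2\ell-j+1}\ge r_j\ge r_{2\ell-j}$ directly --- and your counts ($2j-2$ and $2k-2j-3$, giving ranks $k-2j$ and $2j-k+1$) check out against \eqref{canon}.
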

\begin{proof}
	The key observation is that from Proposition \ref{swap}, it must be the case  that for indices $1\le j_1 < j_2 \le n-1, (r_{j_1}-r_{j_2})(j_1+j_2+1-\gamma) \le 0.$ \\ 
	a) If $\gamma <4$, then  $j_1+j_2+1-\gamma > 0,$ and the conclusion follows. \\
	b) Since $\gamma \notin \naturals, k<\gamma <k+1.$ From Proposition \ref{swap} we find that $r_j \ge r_{k-j-1}$ for $j=1, \ldots, \lfloor\frac{k-2}{2}\rfloor, r_j \le r_{k-j} $ for $j=1, \ldots, \lfloor\frac{k-1}{2}\rfloor,$ and $r_j \le r_{j+1}$ for $j \ge \lfloor\frac{k-1}{2}\rfloor.$ 
	
	Suppose that $k$ is odd, with $k=2\ell+1$. Then $  r_{2\ell-j+1}\ge  r_j\ge r_{2\ell-j}, j=1, \ldots, \ell-1$ and $r_j \le r_{j+1}, j\ge \ell.$ It now follows that $r_j=\rho_{2\ell-2j+1}, j=1, \ldots, \ell, r_j=\rho_{2j-2\ell}, j=\ell+1, \ldots, 2\ell -1,$ and $r_j=\rho_j, j \ge 2\ell.$ 
	%
	A similar argument applies if $k$ is even, and we may 
	 then express the odd and even cases in unified form as \eqref{canon}. \\
c)  From Proposition \ref{swap} we find that $r_j \ge r_{k-j-2}, j=1, \ldots, \lfloor \frac{k-3}{2}\rfloor, r_j \le r_{k-j}, j=1, \ldots, \lfloor \frac{k-1}{2}\rfloor$ and $r_j \le r_{j+1}, j \ge  \frac{k-1}{2}\rfloor.$ Note also from Proposition \ref{swap} that for $ j=1, \ldots, \lfloor \frac{k-3}{2}\rfloor$ we have $r_{k-j}\ge r_{k-j-1}\ge r_{k-j-2}.$ 

If $k=2\ell+1$ is odd, we then have $r_j \ge r_{2\ell-j-1}, j=1, \ldots, \ell-1, r_j\le  r_{2\ell-j+1}, j=1, \ldots, \ell, r_j\le r_{j+1}, j \ge \ell.$ 
We also have $r_{2\ell-j+1}\ge r_{2\ell -j}\ge r_{2 \ell -j-1}, j=1, \ldots, \ell-1.$ In particular, $r_{2 \ell -j+1}\ge r_j, r_{2 \ell -j}\ge r_{2 \ell -j-1}, j=1, \ldots, \ell-1$ and 
$r_j\le r_{j+1}, j \ge \ell.$ If it happens to be the case that $ r_j \ge r_{2 \ell -j}$ for each $j=1, \ldots, \ell-1,$ then we recover the ordering  \eqref{canon}. On the other hand, if there are indices $j_1, \ldots, j_m \in \{1, \ldots, \ell-1\}$ for which  $ r_{j_i} < r_{2 \ell -j_i}, i=1, \ldots, m$ we may perform a sequence of $m$ switches that exchange the entries  $r_{j_i}$ and $r_{2\ell -j_i}, i=1, \ldots, m$. According to Proposition \ref{swap}, those exchanges leave the value of the objective function unchanged. We deduce that any maximizing ordering can be constructed by starting with the ordering in \eqref{canon}, then possibly exchanging  $r_{j_i}$ and $r_{2\ell -j_i}, i=1, \ldots, m$ for indices 
$j_1, \ldots, j_m \in \{1, \ldots, \ell-1\}.$


If $k=2\ell$ is even, then as above we find that $r_j\ge r_{2\ell-j-2}, j=1, \ldots, \ell-2, r_j \le r_{2\ell-j}, j=1, \ldots, \ell-1, r_j \le r_{j+1}, j \ge \ell-1,$ and $r_{2\ell-j} \ge r_{2\ell-j-1} \ge  r_{2\ell-j-2}, j=1, \ldots, \ell-2.$ If it happens to be the case that $r_j \ge r_{2\ell-j-1}, j=1, \ldots, \ell-1,$ then we generate the ordering \eqref{canon}.  On the other hand, if there are indices $j_1, \ldots, j_m \in \{1, \ldots, \ell-1\}$ for which  $ r_{j_i} < r_{2 \ell -j_i-1}, i=1, \ldots, m$ we may perform a sequence of $m$ switches that exchange  $r_{j_i}$ and $r_{2\ell -j_i-1}, i=1, \ldots, m$. From Proposition \ref{swap}, those exchanges leave the value of the objective function unchanged. We deduce that any maximizing ordering can be constructed by starting with  \eqref{canon}, then possibly exchanging  $r_{j_i}$ and $r_{2\ell -j_i-1}, i=1, \ldots, m$ for indices 
$j_1, \ldots, j_m \in \{1, \ldots, \ell-1\}.$
\end{proof}

\begin{remark}
{\rm{We emphasize that the ordering in \eqref{canon} depends on the specific value of $k$. 
}}	
\end{remark}

\begin{corollary}\label{cor:min} 
	Suppose that $P$ is an $n \times n$ partial  stochastic matrix where row $n$ has been specified as $p_{n,j}=r_{n-j}, j=1, \ldots, n,$ and all remaining entries are unspecified. Suppose further that $r_j<1, j=0, \ldots, n-1.$ Let $\rho_j, j=1, \ldots, n-1$ denote the sequence $r_1, \ldots, r_{n-1}$ written in nondecreasing order. For each $k=4, \ldots, n-2$, let $r_j^{(k)}$ denote the ordering of \eqref{canon} associated with  $k$. Then 
	\begin{flalign*}
		&	m(P) = 
			\min \left\{ \frac{n-2}{2}+\frac{1}{1-r_0},  n-1-\frac{\sum_{j=1}^{n-1}j(j+1) \rho_j}{2(1+\sum_{j=1}^{n-1}j \rho_j)} , \right.\\ 	&	\left. n-1-\frac{\sum_{j=1}^{n-1}j(j+1) r_j^{(4)}}{2(1+\sum_{j=1}^{n-1}j r_j^{(4)})}, 
	 \ldots, 
		n-1-\frac{\sum_{j=1}^{n-1}j(j+1) r_j^{(n-2)}}{2(1+\sum_{j=1}^{n-1}j r_j^{(n-2)})} \right\}.
	\end{flalign*}
\end{corollary}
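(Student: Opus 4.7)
The proof is mostly a bookkeeping argument that combines Theorem~\ref{row-main} with the structural description of maximizing orderings in Proposition~\ref{prop:order}. The plan is as follows.

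First, I would invoke Theorem~\ref{row-main} to reduce the problem to understanding the quantity
\[
\gamma := \max_{\sigma \in S_{n-1}} \frac{\sum_{j=1}^{n-1}j(j+1) r_{\sigma(j)}}{1+\sum_{j=1}^{n-1}j r_{\sigma(j)}},
\]
so that $m(P) = \min\{(n-2)/2 + 1/(1-r_0),\ n - 1 - \gamma/2\}$. The task is then to identify a small, explicit list of permutations $\sigma$ on which it suffices to evaluate the objective in order to compute $\gamma$.

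Second, I would observe that the hypothesis $r_j < 1$ for all $j=0,\ldots,n-1$ places us strictly inside the regime covered by Proposition~\ref{prop:order}, since (as remarked just before Proposition~\ref{prop:order}) the boundary case $\gamma = n-1$ would force $r_{n-1}=1$. Hence one of the three cases (a), (b), (c) of Proposition~\ref{prop:order} applies. I would then do a case split according to where $\gamma$ lies:
\begin{itemize}
\item If $\gamma < 4$, then by part (a) the maximum is attained by the nondecreasing reordering $\rho_1,\ldots,\rho_{n-1}$, so it is enough to evaluate the objective at $\sigma = \rho$.
\item If $\gamma \notin \mathbb{N}$ and $\gamma > 4$, then by part (b) the unique maximizing ordering is $r^{(k)}$ with $k = \lfloor\gamma\rfloor \in \{4,\ldots,n-2\}$.
\item If $\gamma = k \in \mathbb{N}$ with $4 \le k \le n-2$, then by part (c) the ordering $r^{(k)}$ is among the maximizers (the other maximizing orderings, obtained by the prescribed swaps, yield the \emph{same} value of the objective by Proposition~\ref{swap}, so we lose nothing by using $r^{(k)}$).
\end{itemize}

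Third, combining these three subcases, the maximum $\gamma$ must be attained by at least one of the $n-3$ candidate permutations $\rho, r^{(4)}, r^{(5)}, \ldots, r^{(n-2)}$. Consequently
\[
\gamma \;=\; \max\!\left\{ \frac{\sum_{j=1}^{n-1}j(j+1)\rho_j}{1+\sum_{j=1}^{n-1}j\rho_j},\; \frac{\sum_{j=1}^{n-1}j(j+1) r_j^{(4)}}{1+\sum_{j=1}^{n-1}j r_j^{(4)}},\;\ldots,\; \frac{\sum_{j=1}^{n-1}j(j+1) r_j^{(n-2)}}{1+\sum_{j=1}^{n-1}j r_j^{(n-2)}} \right\},
\]
and substituting into the formula of Theorem~\ref{row-main} produces exactly the claimed expression for $m(P)$.

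There is no real obstacle here beyond verifying that the enumeration in the second step genuinely covers every possible value of $\gamma$ permitted by the hypothesis $r_j<1$; the only point requiring a brief comment is to rule out $\gamma = n-1$ (handled by the hypothesis) and to note that for integer $\gamma$ in $\{4,\ldots,n-2\}$ the extra maximizers flagged in Proposition~\ref{prop:order}(c) give the same objective value as $r^{(k)}$ (by Proposition~\ref{swap}) and so may safely be omitted from the candidate list.
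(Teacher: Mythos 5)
Your proposal is correct and is essentially the paper's intended argument: the paper states this corollary without a written proof, treating it as an immediate consequence of Theorem~\ref{row-main} together with the case analysis of Proposition~\ref{prop:order}, which is exactly the reduction you spell out (including the two points that need checking, namely that $r_j<1$ rules out $\gamma=n-1$ and that the extra maximizers in case (c) may be discarded). Nothing is missing.
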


\begin{corollary}\label{incr} 
	Suppose that $3\le n \le 5.$  Then $$m(P)=\min \left\{\frac{n-2}{2} +\frac{1}{1-r_0},n-1-  \frac{\sum_{j=0}^{n-1}j(j+1) \rho_j}{2\sum_{j=0}^{n-1}(j+1) \rho_{j}} \right \}.$$ 
\end{corollary}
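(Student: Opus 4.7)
The plan is to read this as an almost immediate specialization of Corollary \ref{cor:min}, coupled with the algebraic identity that reindexes the expression to start the sums at $j=0$.

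First I would observe that for $n \in \{3,4,5\}$ we have $n-2 \le 3$, so the set of integers $\{4,5,\ldots,n-2\}$ indexing the canonical orderings $r^{(k)}$ in Corollary \ref{cor:min} is empty. Consequently the minimum in Corollary \ref{cor:min} collapses to just two terms:
\[
m(P)=\min\left\{\frac{n-2}{2}+\frac{1}{1-r_0},\; n-1-\frac{\sum_{j=1}^{n-1}j(j+1)\rho_j}{2\bigl(1+\sum_{j=1}^{n-1}j\rho_j\bigr)}\right\}.
\]
Equivalently, in the language of Proposition \ref{prop:order}, for $n\le 5$ one always has $\gamma\le n-1\le 4$, so only case (a) can occur and the sorted ordering $\rho_j$ is the unique candidate beyond the cycle case of Proposition \ref{row-cycle}.

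Next I would convert the expression inside the minimum to the form stated in the corollary by setting $\rho_0:=r_0$ and using the row-stochasticity identity $\sum_{j=0}^{n-1}\rho_j=\sum_{j=0}^{n-1}r_j=1$. A direct reindexing gives
\[
\sum_{j=0}^{n-1}(j+1)\rho_j \;=\; \sum_{j=0}^{n-1}\rho_j+\sum_{j=1}^{n-1}j\rho_j \;=\; 1+\sum_{j=1}^{n-1}j\rho_j,
\]
while the $j=0$ summand vanishes in the numerator, so
\[
\sum_{j=0}^{n-1}j(j+1)\rho_j \;=\; \sum_{j=1}^{n-1}j(j+1)\rho_j.
\]
Substituting these two identities into the second term of the minimum yields exactly $n-1-\dfrac{\sum_{j=0}^{n-1}j(j+1)\rho_j}{2\sum_{j=0}^{n-1}(j+1)\rho_j}$, which is the claimed expression.

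There is really no obstacle here: the content is that for $n\le 5$ the optimal ordering from Proposition \ref{prop:order} is simply the nondecreasing sort (case (a), since $\gamma<4$), and the only remaining work is the cosmetic reindexing above. The result therefore follows as a direct corollary of Corollary \ref{cor:min}.
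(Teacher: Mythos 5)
Your proposal is correct and is exactly the intended argument: the paper states Corollary \ref{incr} without proof as an immediate specialization of Corollary \ref{cor:min}, and your two observations — that the index set $\{4,\ldots,n-2\}$ of canonical orderings is empty when $n\le 5$, and that setting $\rho_0=r_0$ together with $\sum_{j=0}^{n-1}r_j=1$ turns $1+\sum_{j=1}^{n-1}j\rho_j$ into $\sum_{j=0}^{n-1}(j+1)\rho_j$ — are precisely what is needed.
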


\begin{remark}{\rm{In this remark, we show how, for each $k=4, \ldots, n-2$ there is a sequence $r_0, r_1, \ldots, r_{n-1}$ such that the optimal ordering corresponds to that given by \eqref{canon} for the parameter $k$. Fix such a $k$ and suppose that $\gamma \in [k,k+1).$ 
			
Suppose that $\epsilon \in (0,\frac{1}{2})$ and select a vector $c \in \reals^{n-2}$ with $c_j\ge 0, j=1, \ldots, n-2$ and $\sum_{j=1}^{n-2} c_j=1.$ Now set $\hat r_j=\epsilon c_j, j=1, \ldots, n-2$ and $\hat r_{n-1}=1-\epsilon.$ Fix a $k$ with $4\le k \le n-2$ and assume that $\hat r_1, \ldots, \hat r_{n-1}$ has the  ordering given by \eqref{canon} corresponding to $k.$ 
Fix $\gamma \in [k, k+1),$ define $r_0$ via $$1-r_0 = \frac{\gamma}{(1-\epsilon)(n-1)(n-\gamma) + \epsilon(\sum_{j=1}^{n-2}
jc_j(j+1-\gamma)}$$ and set $r_j=(1-r_0)\hat r_j, j=1, \ldots , n-1.$ (We note that $r_0 \in (0,1)$ for all sufficiently small $\epsilon>0.$) For notational convenience we suppress the dependence on $\epsilon$ and $\gamma.$ It is straightforward to determine that $\frac{\sum_{j=0}^{n-1} j(j+1)r_j}{\sum_{j=0}^{n-1} (j+1)r_j} = \gamma.$ Observe also that for all sufficiently small $\epsilon>0$ and any permutation $\sigma $ of $\{1, \ldots, n-1\},$ $\frac{\sum_{j=1}^{n-1} j(j+1)r_{\sigma(j)}}{r_0+\sum_{j=1}^{n-1} (j+1)r_{\sigma(j)}} = \gamma + O(\epsilon).$ In particular, for any such $\epsilon$ and $\sigma, \frac{\sum_{j=1}^{n-1} j(j+1)r_{\sigma(j)}}{r_0+\sum_{j=1}^{n-1} (j+1)r_{\sigma(j)}}< k+1. $ We deduce that $\max \left \{ \frac{\sum_{j=1}^{n-1} j(j+1)r_{\sigma(j)}}{r_0+\sum_{j=1}^{n-1} (j+1)r_{\sigma(j)}} \Bigg|\sigma \in S_{n-1}  \right \} \in [\gamma, k+1).$  If $k<\gamma,$ then Proposition \ref{prop:order} b) applies, and hence the unique ordering of $r_1, \ldots, r_{n-1}$ that yields the maximum is in fact given by \eqref{canon}. 
	
Suppose now that $\gamma=k.$ If it were the case that $$\max \left \{ \frac{\sum_{j=1}^{n-1} j(j+1)r_{\sigma(j)}}{r_0+\sum_{j=1}^{n-1} (j+1)r_{\sigma(j)}} \Bigg|\sigma \in S_{n-1}  \right \} > \gamma,$$ then  Proposition \ref{prop:order} b) would necessitate that the  ordering \eqref{canon} yields the maximum value, a contradiction (since that ordering yields the value $\gamma$ ). Consequently, we find that $\max \left \{ \frac{\sum_{j=1}^{n-1} j(j+1)r_{\sigma(j)}}{r_0+\sum_{j=1}^{n-1} (j+1)r_{\sigma(j)}} \Bigg|\sigma \in S_{n-1}  \right \} = k.$

Finally, we note that for this sequence, and sufficiently small $\epsilon>0$, 
we have $\frac{n}{2}+ \frac{1}{1-r_0}= \frac{n}{2} +\frac{(n-1)(n-\gamma)}{\gamma} + O(\epsilon),$ while the ordering given by \eqref{canon} corresponds to $n-1-\frac{\gamma}{2}.$ It now follows that $\frac{n}{2}+ \frac{1}{1-r_0}$ is strictly greater than the minimum value of Kemeny's constant. 
}}	
\end{remark}



We close the paper with a brief numerical illustration. 

\begin{example}\rm{
	Suppose that $n=6$ and $r_0=0.$ Consider the values $\frac{1}{3}, \frac{1}{3}, \frac{1}{6}, \frac{1}{6}, 0$ as the remaining specified entries in last row of the partial stochastic matrix $P$. From Corollary \ref{cor:min}, it suffices to consider $r_1, r_2, r_3, r_4$ in one of the following two orders: $0, \frac{1}{6}, \frac{1}{6}, \frac{1}{3}, \frac{1}{3}$; $ \frac{1}{6}, 0, \frac{1}{6}, \frac{1}{3}, \frac{1}{3}.$ The first ordering yields $\frac{86}{29} \approx 2.9655$ for Kemeny's constant, while the second ordering yields $\frac{83}{28} \approx 2.9643$ for Kemeny's constant. Observing that in this case $\frac{n-2}{2}+\frac{1}{1-r_0}=3,$ we find that $m(P)=\frac{83}{28}.$ 
}	
\end{example}



\bigskip
\noindent
{\emph{Acknowledgement}}: 
The author's research is supported in part by the Natural Sciences and Engineering Research Council
of Canada under grant number RGPIN–2019–05408. 



 \end{document}